\newcommand{\extraNode}[6]%
{%
\dynkinPlaceRootRelativeTo{#1}{#2}{#3}{#4}{#5}
\dynkinIndefiniteSingleEdge{#1}{#2}
\dynkinRootMark{o}{#1}
\advance\dynkin@nodes by 1
\dynkinLabelRoot{#1}{#6} 
}%
\def\<{\langle}
\def\>{\rangle}
\newcommand{{\BG}}{\ensuremath{\mathbb {G}}\xspace}
\newcommand{{\BK}}{\ensuremath{\mathbb {K}}\xspace}
\newcommand{\BQ}{\ensuremath{\mathbb {Q}}\xspace}
\newcommand{\BZ}{\ensuremath{\mathbb {Z}}\xspace}
\newcommand{\CL}{\ensuremath{\mathcal {L}}\xspace}
\let\Im\relax
\DeclareMathOperator{\Im}{Im}
\DeclareMathOperator{\Ker}{Ker}
\DeclareMathOperator{\supp}{supp}
\DeclareMathOperator{\Rep}{\mathbf{Rep}}
\newtheorem{theorem}{Theorem}
\newtheorem{proposition}[theorem]{Proposition}
\newtheorem{lemma}[theorem]{Lemma}
\newtheorem{corollary}[theorem]{Corollary}
\theoremstyle{definition}
\newtheorem*{acknowledgement}{Acknowledgement}
\newtheorem{definition}[theorem]{Definition}
\newtheorem{example}[theorem]{Example}
\newtheorem{remark}[theorem]{Remark}
\numberwithin{equation}{section}
\numberwithin{theorem}{section}
\theoremstyle{plain}
\newtheorem{Theorem}{Theorem}
\renewcommand{\to}{%
   \ifbool{@display}{\longrightarrow}{\rightarrow}%
   }
\let\shortmapsto\mapsto
\renewcommand{\mapsto}{%
   \ifbool{@display}{\longmapsto}{\shortmapsto}%
   }
\newlength{\olen}
\newlength{\ulen}
\newlength{\xlen}
\newcommand{\xra}[2][]{%
   \ifbool{@display}%
      {\settowidth{\olen}{$\overset{#2}{\longrightarrow}$}%
       \settowidth{\ulen}{$\underset{#1}{\longrightarrow}$}%
       \settowidth{\xlen}{$\xrightarrow[#1]{#2}$}%
       \ifdimgreater{\olen}{\xlen}%
          {\underset{#1}{\overset{#2}{\longrightarrow}}}%
          {\ifdimgreater{\ulen}{\xlen}%
             {\underset{#1}{\overset{#2}{\longrightarrow}}}
             {\xrightarrow[#1]{#2}}}}%
      {\xrightarrow[#1]{#2}}
   }
\newcommand{\xyra}[2][]{%
   \settowidth{\xlen}{$\xrightarrow[#1]{#2}$}%
   \ifbool{@display}%
      {\settowidth{\olen}{$\overset{#2}{\longrightarrow}$}%
       \settowidth{\ulen}{$\underset{#1}{\longrightarrow}$}%
       \ifdimgreater{\olen}{\xlen}%
          {\mathrel{\xymatrix@M=.12ex@C=3.2ex{\ar[r]^-{#2}_-{#1} &}}}%
          {\ifdimgreater{\ulen}{\xlen}%
             {\mathrel{\xymatrix@M=.12ex@C=3.2ex{\ar[r]^-{#2}_-{#1} &}}}
             {\mathrel{\xymatrix@M=.12ex@C=\the\xlen{\ar[r]^-{#2}_-{#1} &}}}}}%
      {\mathrel{\xymatrix@M=.12ex@C=\the\xlen{\ar[r]^-{#2}_-{#1} &}}}%
   }
\newcommand{\xla}[2][]{%
   \ifbool{@display}%
      {\settowidth{\olen}{$\overset{#2}{\longleftarrow}$}%
       \settowidth{\ulen}{$\underset{#1}{\longleftarrow}$}%
       \settowidth{\xlen}{$\xleftarrow[#1]{#2}$}%
       \ifdimgreater{\olen}{\xlen}%
          {\underset{#1}{\overset{#2}{\longleftarrow}}}%
          {\ifdimgreater{\ulen}{\xlen}%
             {\underset{#1}{\overset{#2}{\longleftarrow}}}
             {\xleftarrow[#1]{#2}}}}%
      {\xleftarrow[#1]{#2}}
   }
\newcommand{\isoarrow}{%
   \ifbool{@display}{\overset{\sim}{\longrightarrow}}{\xrightarrow\sim}%
   }
\begin{document}

\title[Perfect submonoids of dominant weights]{Perfect submonoids of dominant weights}

\thanks{}
\author[Chengze Duan]{Chengze Duan}
\address[C.Duan]{Department of Mathematics, University of Maryland, College Park, MD, 20740}
\keywords{Linear algebraic groups, tensor product decomposition, Vinberg monoids}
\subjclass[2010]{20G05,22E46,20M99}
\email{cduan12@umd.edu}

\begin{abstract}
Let $G$ be a connected semisimple group. Vinberg introduced the notion of perfect submonoids of dominant weights of $G$ in the study of Vinberg monoids. In this paper, we give explicit descriptions of the perfect submonoids.
\end{abstract}

\maketitle

\section{Introduction}\label{1}

\subsection{Perfect submonoids}



Let $K$ be an algebraic closed field of characteristic 0 and $G$ be a connected reductive group over $K$. Let $T$ be a maximal torus of $G$. Denote the weight lattice and the root lattice of $G$ by $X^*(T)$ and $Q$. Let $X^*_+(T)$ be the set of dominant weights of $G$. For any $\lambda$ in $X^*_+(T)$, we respectively let $L(\lambda)$ be the irreducible representation of $G$ with highest weight $\lambda$. For any two dominant weights $\lambda,\mu$, define
\begin{center}
    $X(\lambda,\mu)=\{ \nu \mid L(\nu)$ is a direct summand of $L(\lambda)\bigotimes L(\mu)\}$.
\end{center}

In the study of the classification of reductive monoids, Vinberg introduced the following definition.

\begin{definition}\label{1.1} \cite[\S 1] {V}
An additive submonoid $L$ of dominant weights is called \textit{perfect} if

\begin{center}
    $\lambda ,\mu \in L$  implies  $X(\lambda ,\mu) \subset L$.
\end{center}
\end{definition}

In this paper, we give a complete characterization of perfect submonoids of dominant weights for connected semisimple groups. We also discuss the perfect submonoids for reductive groups.

\subsection{Main results}
The main result of this paper is the following.

\begin{Theorem}\label{A}
Let $G$ be a connected semisimple algebraic group with a maximal torus $T$.

a) The perfect submonoids of $X^*_+(T)$ with full component support are exactly the intersection of the sublattices of $X^*(T)$ containing $Q$ with $X^*_+(T)$.

b) There is a natural bijection between the perfect submonoids of $X^*_+(T)$ with full component support and the subgroups of the center of $G$.
\end{Theorem}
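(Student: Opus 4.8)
The plan is to prove part (a) directly and then deduce part (b) by duality. The one fact I would use throughout is that every weight of $L(\lambda)\otimes L(\mu)$ lies in the coset $\lambda+\mu+Q$, so $\nu\in X(\lambda,\mu)$ forces $\nu\equiv\lambda+\mu\pmod Q$, while the Cartan component gives $\lambda+\mu\in X(\lambda,\mu)$. The easy inclusion in (a) is then immediate: for a sublattice $Q\subseteq M\subseteq X^*(T)$ the set $M\cap X^*_+(T)$ is a submonoid (a sum of dominant weights is dominant and stays in $M$) and is perfect (if $\lambda,\mu\in M$ and $\nu\in X(\lambda,\mu)$ then $\nu\equiv\lambda+\mu\in M\pmod Q$ and $Q\subseteq M$ give $\nu\in M$); it has full component support because $2\rho=\sum_{\alpha>0}\alpha\in Q\subseteq M$ is regular dominant. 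For the converse, let $L$ be perfect with full component support, write $\pi\colon X^*(T)\to X^*(T)/Q$, and set $\bar L=\pi(L)$. As a submonoid of the finite group $X^*(T)/Q$, $\bar L$ is a subgroup, so $M:=\pi^{-1}(\bar L)$ is a sublattice containing $Q$. Since $L\subseteq M\cap X^*_+(T)$ trivially, (a) reduces to the reverse inclusion.

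Next I would isolate the one nontrivial input, the \emph{Key Lemma}: $Q\cap X^*_+(T)\subseteq L$. Granting it, the reverse inclusion follows cleanly. First, Cartan closure gives $\lambda+\beta\in X(\lambda,\beta)\subseteq L$ for all $\lambda\in L$ and $\beta\in Q\cap X^*_+(T)$. Now fix a target $\nu\in M\cap X^*_+(T)$ and choose $\lambda_0\in L$ with $\pi(\lambda_0)=\pi(\nu)$. For $N\gg0$ the weight $\beta:=(\nu-\lambda_0)+2N\rho$ is dominant (as $\langle 2\rho,\alpha^\vee\rangle=2$ for simple $\alpha$) and lies in $Q$, hence $\beta\in Q\cap X^*_+(T)\subseteq L$ and $\sigma:=\lambda_0+\beta=\nu+2N\rho\in L$. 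Because $\rho$ is self-dual ($w_0\rho=-\rho$), the PRV theorem applied to $(\sigma,2N\rho)$ with $w=w_0$ produces the dominant conjugate $\{\sigma+w_0(2N\rho)\}_{+}=\{\nu\}_{+}=\nu$ as a constituent, i.e. $\nu\in X(\sigma,2N\rho)\subseteq L$. This gives $M\cap X^*_+(T)\subseteq L$ and finishes (a).

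The hard part, and the place I expect to spend real effort, is the Key Lemma; this is exactly where full component support is used. Because $\bar L$ is a group, each $\lambda\in L$ admits $\mu\in L$ with $\pi(\mu)=-\pi(\lambda)$, so $\lambda+\mu\in Q\cap X^*_+(T)$ actually lies in $L$; combined with full component support this already produces nonzero root-lattice elements of $L$ supported on every simple factor. The remaining task is to promote these to a full generating set of the monoid $Q\cap X^*_+(T)$. I would do this by running the perfect closure downward inside the root-lattice coset, using that $\lambda+\mu-\alpha\in X(\lambda,\mu)$ whenever $\alpha$ is a simple root with $\langle\lambda,\alpha^\vee\rangle,\langle\mu,\alpha^\vee\rangle\ge1$, to descend to the fundamental generators one factor at a time. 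Establishing that full component support forces \emph{all} of $Q\cap X^*_+(T)$ into $L$ uniformly across Dynkin types is the main obstacle, and is the step most likely to require an induction on the dominance order or a short type-by-type verification; note that full support is indispensable, since for instance $L=\{0\}$ is perfect yet violates the lemma.

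Finally, part (b) is formal given (a). The map $L\mapsto M=\pi^{-1}(\bar L)$ is a bijection from perfect submonoids with full component support to sublattices $M$ with $Q\subseteq M\subseteq X^*(T)$, with inverse $M\mapsto M\cap X^*_+(T)$; injectivity holds because every $Q$-coset in $M$ meets the dominant chamber, so $M\cap X^*_+(T)$ recovers $M/Q$ and hence $M$. Such sublattices correspond bijectively to subgroups of the finite group $X^*(T)/Q$, and for semisimple $G$ one has $X^*(T)/Q\cong X^*(Z(G))$. Cartier duality for the diagonalizable group $Z(G)$ then matches subgroups of $X^*(T)/Q$ with subgroups of $Z(G)$, and composing the three correspondences yields the asserted natural bijection.
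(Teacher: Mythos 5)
Your overall architecture is sound, and much of it runs parallel to the paper's own proof: your easy inclusion is Proposition \ref{3.1}; your observation that $\pi(L)$ is a subgroup of the finite group $X^*(T)/Q$, together with the reduction of part (a) to the single statement $Q\cap X^*_+(T)\subseteq L$, is the content of Lemma \ref{3.7} and Proposition \ref{3.8}; and your deduction of part (b) via $X^*(Z(G))\cong X^*(T)/Q$ and duality for finite diagonalizable groups is Proposition \ref{3.10} in more abstract form. Your PRV cancellation step is correct and even a little slicker than the paper's corresponding argument: adding $\beta=(\nu-\lambda_0)+2N\rho\in Q\cap X^*_+(T)$, taking the Cartan component $\sigma=\nu+2N\rho$, and then applying Theorem \ref{2.3} with $w=w_0$ and $w_0\rho=-\rho$ to recover $\nu$ is legitimate, and it avoids invoking the full strength of Proposition \ref{3.5}.

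The genuine gap is your Key Lemma, which you correctly identify as the crux and then do not prove; and the tool you propose for it would not survive being written out. Measure weights by $\operatorname{ht}(\mu)$, the sum of the coefficients of $\mu$ in the basis of simple roots. Your descent step $\lambda+\mu-\alpha\in X(\lambda,\mu)$ removes one simple root only at the cost of adding an entire element $\mu\in L$, so it changes the height by $\operatorname{ht}(\mu)-1$. In every quasi-simple type of rank at least $2$, every nonzero dominant weight has height at least $1$ (the minimum over fundamental weights is $\geq 1$), so this step \emph{never decreases} height. But the only elements of $Q\cap X^*_+(T)$ you have produced inside $L$ are sums $\lambda+\mu$ of elements of $L$, which are as large as the generators of $L$; if $L$ is, say, the perfect closure of a single huge weight, your descent can never reach bounded targets such as the highest root, which the theorem nevertheless requires to lie in $L$. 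The mechanism that actually works is the one you already used in your reduction: tensoring with an element $\eta\in L$ satisfying $w_0\eta=-\eta$ and applying PRV with $w=w_0$, which subtracts $\eta$ in one stroke. The entire difficulty is manufacturing such self-dual $\eta$ inside an arbitrary perfect $L$, and this is genuinely type-dependent, being nontrivial exactly when $w_0\neq -1$, i.e.\ in types $A_n$ $(n\geq 2)$, $D_{2n+1}$ and $E_6$.

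This is why the paper's proof of Lemma \ref{3.7} routes through Proposition \ref{3.5}, whose proof occupies all of Section \ref{4}: Lemma \ref{a.1} constructs, by explicit chains of PRV components computed separately in types $E_6$, $A_n$ and $D_{2n+1}$, a sequence inside $L$ terminating in a weight $\nu_r$ with $w_0(\nu_r)=-\nu_r$, and Lemmas \ref{a.2} and \ref{a.3} globalize this across the quasi-simple factors. So what you describe as ``a short type-by-type verification'' to be filled in later is in fact the bulk of the paper; as it stands, the hardest part of the theorem is missing from your proposal.
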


We refer to Definition \ref{3.6} and Definition \ref{3.13} for the definition of component support. Based on Theorem \ref{A}, one can deduce the characterization for arbitrary perfect submonoids of dominant weights. 

\subsection{Strategy of the proof}
We first reduce the general case to simply connected case by considering the simply connected cover. For any dominant weight $\lambda$ in $L$, there is a special dominant weight $\omega_{\lambda}$ in $L$ by applying $\textit{PRV conjecture}$. Based on the existence of $\omega_{\lambda}$, again by applying $\textit{PRV conjecture}$, we show that if $L$ is a nonzero perfect submonoid of dominant weights, then for any dominant weight $\lambda$ in $L$, the dominant weights which are also weights of $L(\lambda)$ are all contained in $L$. We define the component support for each submonoid of dominant weights. Then we relate the perfect submonoids of dominant weights with full component support to the subgroups of the cocenter and prove Theorem \ref{A} in simply connected case.

Then we prove Theorem \ref{A} based on simply connected case and deduce its corollary for arbitrary perfect submonoids of dominant weights.

At the end of the paper, we look at the connected reductive groups. We also compare our results with the classification of reductive monoids in \cite{V}.

\begin{acknowledgement}
I would like to thank my advisor Xuhua He for suggesting the problem and many helpful discussions. Part of the work was done during my visit to the department of Mathematics and the Institute of mathematical science at the Chinese University of Hong Kong. I also thank Jeffrey Adams, Thomas Haines, Arghya Sadhukhan for discussions.
\end{acknowledgement}

\section{Preliminaries}\label{2}
\subsection{Basic facts about algebraic groups}
Recall that $K$ is algebraically closed of characteristic 0 and $G$ is a connected reductive algebraic group over $K$. Let $T$ be a maximal torus of $G$. The $\textit{root datum}$ of $G$ is a quadruple $(X^*(T), R, X_*(T), R^\vee)$, where $X^*(T)$ is the weight lattice, $X_*(T)$ is the coweight lattice,
$R$ is the set of roots and $R^\vee $ is the corresponding set of coroots.

Let $Q=\mathbb{Z}R$ be the root lattice of $G$. Let $V=X^*(T) \otimes \mathbb{R}$, there is a natural pairing $\langle ,\rangle:X^*(T) \times X_*(T) \to \BZ$ and $P:=\{x \in V \mid \langle x, R^\vee \rangle \subset \mathbb{Z}\}$. Then $Q\subset X^*(T) \subset P$. If $G$ is simply connected, then $X^*(T)=P$.

Choose the set of positive roots $R_+ \subset R$. Let $\Delta = \{ \alpha_1, \alpha_2, ... ,\alpha_n\} \subset R_+$ be the set of simple roots. The fundamental dominant weights with respect to $\Delta$ are $\omega_1,\omega_2,...,\omega_n$. For any two weights $\lambda,\mu$ in $X^*(T)$, write $\mu \preceq \lambda$ if $\mu=\lambda- \sum\limits_{i=1}^{n} k_i \alpha_i$ where $k_i \in \mathbb{Z}_{\geq 0}$ for $1\leq i \leq n$. Let $W$ be the Weyl group of $G$. Then $W$ is generated by simple reflections $\{s_i\}_{i=1}^n$, where $s_i$ acts on $X^*(T)$ by $s_i(\lambda)=\lambda-\langle \lambda, \alpha_i^{\vee} \rangle \alpha_i$, for all $1\leq i \leq n$.

Let $X^*_+(T)$ be the set of dominant weights of $G$. Recall that for any dominant weight $\lambda$ in $X^*_+(T)$, $L(\lambda)$ is the irreducible representation of $G$ with highest weight $\lambda$. Let $L(\lambda)^*$ be its dual representation, which is irreducible with highest weight $\lambda^*$. Denote the set of weights of $L(\lambda)$ by $\Pi(\lambda)$. For any $\mu \in \Pi(\lambda)$, denote the $\mu$-weight space of $L(\lambda)$ by $L(\lambda)_\mu$ and the dimension of $L(\lambda)_\mu$ by $n_\mu(\lambda)$. It is well known that $\mu \in \Pi(\lambda)$ implies $\mu \preceq \lambda$.

By definition,
A subset $\Pi$ of $X^*(T)$ is called $\textit{saturated}$ if for any $\lambda \in \Pi,\alpha \in R$ and $0\leq i\leq \langle \lambda, \alpha^\vee \rangle$, we have $\lambda-i \alpha \in \Pi$. The following properties are well-known, see e.g. \cite[\S 21]{Hum}.

\begin{itemize}
    \item For any $ \lambda' \in \Pi(\lambda)$ and  $w\in W$, we have $w(\lambda') \in \Pi(\lambda)$ and
    $\dim L(\lambda)_{\lambda'}= \dim L(\lambda)_{w(\lambda')}$;
    
    \item $\Pi(\lambda)$ is saturated and if $\mu \in X^*(T)$, then $\mu \in \Pi(\lambda) \text{ is equivalent to that for any } w\in W, w(\mu) \preceq \lambda$. Therefore, $\Pi(\lambda)$ is a finite set and for any dominant weight $ \mu \preceq \lambda$, we have $\mu \in \Pi(\lambda)$.
\end{itemize}

\subsection{Tensor product decomposition}
Let $\lambda,\mu$ be two dominant weights of $G$. We have the tensor product decomposition:

\begin{center}
    $L(\lambda) \bigotimes L(\mu) = \bigoplus\limits_{\nu\in X^*_+(T)} L(\nu)^{\oplus m_{\lambda,\mu}^\nu}$.
\end{center}
Here $m_{\lambda,\mu}^\nu$ is the \textit{tensor product multiplicity}. Recall that $X(\lambda,\mu)$ consists of dominant weights $\nu$, where $L(\nu)$ is a direct summand of $L(\lambda)\bigotimes L(\mu)$. Then $m_{\lambda,\mu}^\nu >0$ if and only if $\nu \in X(\lambda,\mu)$. Therefore, a perfect submonoid of dominant weights is closed under taking direct summands of tensor product.

Recall following classic results describing the possible weights in $X(\lambda,\mu)$.

\begin{lemma} \label{2.1} \cite[Theorem 5.1]{Ko}
Let $\lambda ,\mu,\nu$ be dominant weights in $ X_+^*(T)$. If $\nu \in X(\lambda,\mu)$, then $\nu = \lambda' + \mu$ for some $\lambda' \in \Pi(\lambda)$. In particular, $\nu = \lambda + \mu - \sum\limits_{i=1}^n k_i \alpha_i$, where $k_i \in \mathbb{Z}_{\geq 0}$ for $1\leq i \leq n$. 
\end{lemma}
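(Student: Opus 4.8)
The plan is to translate the statement into the language of the Lie algebra $\mathfrak{g} = \operatorname{Lie}(G)$ with its triangular decomposition $\mathfrak{g} = \mathfrak{n}^- \oplus \mathfrak{h} \oplus \mathfrak{n}^+$. Since $K$ has characteristic $0$, complete reducibility and highest weight theory apply, and the existence of a summand $L(\nu) \subseteq L(\lambda) \otimes L(\mu)$ is equivalent to the existence of a nonzero \emph{highest weight vector} of weight $\nu$, i.e. a vector $u \in (L(\lambda) \otimes L(\mu))_\nu$ annihilated by every positive root vector $e_\alpha$ (equivalently by $\mathfrak{n}^+$). The whole lemma then reduces to the single assertion $\nu - \mu \in \Pi(\lambda)$: once this is known, the ``in particular'' clause is immediate, because $\lambda' := \nu - \mu \in \Pi(\lambda)$ forces $\lambda' \preceq \lambda$, so writing $\lambda - \lambda' = \sum_i k_i \alpha_i$ with $k_i \in \mathbb{Z}_{\ge 0}$ gives $\nu = \lambda + \mu - \sum_i k_i \alpha_i$.

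To exhibit $\nu - \mu$ as a weight of $L(\lambda)$, I would expand the highest weight vector as $u = \sum_j v_j \otimes w_j$, where the decisive choice is to take $\{v_j\}$ to be a \emph{linearly independent} family of weight vectors in $L(\lambda)$, with the $w_j$ the uniquely determined nonzero weight vectors of complementary weight in $L(\mu)$. I would then fix a weight $\mu_0$ that is maximal, in the dominance order $\preceq$, among the weights of the $w_j$, and analyze the identity $e_\alpha u = \sum_j (e_\alpha v_j) \otimes w_j + \sum_j v_j \otimes (e_\alpha w_j) = 0$ by projecting its second tensor factor onto the weight space $L(\mu)_{\mu_0 + \alpha}$. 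The first sum contributes nothing to this component, since a contribution would require some $w_j$ of weight $\mu_0 + \alpha \succ \mu_0$, contradicting the maximality of $\mu_0$. Hence we are left with $\sum_{j:\, \operatorname{wt}(w_j) = \mu_0} v_j \otimes (e_\alpha w_j) = 0$.

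At this point the linear independence of the $v_j$ does the work: it forces $e_\alpha w_j = 0$ for every $j$ with $\operatorname{wt}(w_j) = \mu_0$ and every positive root $\alpha$. Thus each such $w_j$ is a highest weight vector of the irreducible module $L(\mu)$, hence a scalar multiple of its highest weight vector, so $\mu_0 = \mu$. Because $\mu_0$ is attained, there is at least one index $j$ with $w_j$ of weight $\mu$ and $v_j \in L(\lambda)_{\nu - \mu}$ nonzero, which yields $\nu - \mu \in \Pi(\lambda)$ and completes the argument.

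I expect the main obstacle to be bookkeeping rather than any deep idea: one must commit to expanding $u$ using an independent family on the $L(\lambda)$-side (not the $L(\mu)$-side), since it is precisely this independence, together with the vanishing of the top $L(\mu)$-weight component, that lets one peel off a genuine highest weight vector of $L(\mu)$. A secondary point to handle with care is that several maximal weights $\mu_0$ may occur; it suffices to fix any one of them and to verify that maximality in $\preceq$ really annihilates the $\sum_j (e_\alpha v_j) \otimes w_j$ term in the chosen weight component. Everything else is the standard dictionary between summands of a tensor product and $\mathfrak{n}^+$-invariants.
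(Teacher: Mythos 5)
Your proof is correct, but it cannot be "the same as the paper's," because the paper offers no proof of this lemma at all: the statement is simply quoted from Kostant \cite[Theorem 5.1]{Ko}. What you have written is therefore a self-contained substitute, and it is the classical extremal-weight argument: identify a summand $L(\nu)\subseteq L(\lambda)\otimes L(\mu)$ with a nonzero $\mathfrak{n}^+$-invariant vector $u$ of weight $\nu$ (legitimate by complete reducibility in characteristic $0$), expand $u=\sum_j v_j\otimes w_j$ with the $v_j$ linearly independent weight vectors of $L(\lambda)$ and the $w_j$ nonzero weight vectors of $L(\mu)$, fix a weight $\mu_0$ maximal for $\preceq$ among the $\operatorname{wt}(w_j)$, and project $e_\alpha u=0$ onto $L(\lambda)\otimes L(\mu)_{\mu_0+\alpha}$. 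Each step checks out: a contribution of $\sum_j (e_\alpha v_j)\otimes w_j$ to that component would need $\operatorname{wt}(w_j)=\mu_0+\alpha\succ\mu_0$, contradicting maximality; the identity $\sum_j v_j\otimes(e_\alpha w_j)=0$ over the $j$ with $\operatorname{wt}(w_j)=\mu_0$ forces $e_\alpha w_j=0$ by the standard tensor-product fact; a nonzero weight vector of $L(\mu)$ killed by all of $\mathfrak{n}^+$ generates $L(\mu)$ as a highest weight module, so $\mu_0=\mu$; and the ``in particular'' clause follows from the fact, recorded in Section \ref{2}, that $\lambda'\in\Pi(\lambda)$ implies $\lambda'\preceq\lambda$. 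The trade-off between the two routes is clear: Kostant's theorem is a quantitative multiplicity formula (an alternating sum of weight multiplicities), from which this support constraint falls out as a corollary, and the paper in any case needs multiplicity information separately through Lemma \ref{2.2}; your argument is more elementary, uses only highest-weight theory, and proves exactly the qualitative statement needed, but yields no multiplicities. Two minor polish points: it suffices to run the projection argument with simple root vectors $e_{\alpha_i}$ only; and since Section \ref{2} assumes $G$ reductive rather than semisimple, you should remark that the identical argument applies verbatim, as the central torus acts by a character on each irreducible and affects neither the dominance order nor the $\mathfrak{n}^+$-invariant line.
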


\begin{lemma} \label{2.2} \cite[\S 24]{Hum}
Let $\lambda ,\mu$ be dominant weights in $X_+^*(T)$. Suppose that for any $\mu' \in \Pi(\mu)$, $\lambda+ \mu'$ is dominant. Then for any $\mu' \in \Pi(\mu)$, $\lambda+ \mu' \in X(\lambda,\mu)$ with multiplicity $m_{\lambda,\mu}^{\lambda+\mu'}= n_{\mu'}(\mu)$.
\end{lemma}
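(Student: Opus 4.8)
The plan is to pass to formal characters and exploit the Weyl character formula together with its extension to arbitrary (possibly non-dominant) weights. For a dominant weight $\nu$ write $\operatorname{ch} L(\nu)$ for the formal character, and recall the Weyl character formula $\operatorname{ch} L(\nu) = A(\nu+\rho)/A(\rho)$, where $\rho = \omega_1 + \cdots + \omega_n$ is the (strictly dominant) half-sum of positive roots and $A(\xi) = \sum_{w \in W} \sgn(w)\, e^{w\xi}$ with $\sgn(w) = (-1)^{\ell(w)}$. I then extend this by defining, for an arbitrary $\xi \in X^*(T)$, the virtual character $\chi(\xi) := A(\xi+\rho)/A(\rho)$. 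The two facts I will need about $\chi$, both immediate from the antisymmetry $A(w\xi) = \sgn(w)\,A(\xi)$, are: (i) $\chi(\xi) = 0$ whenever $\xi + \rho$ is fixed by a nontrivial element of $W$ (equivalently $A(\xi+\rho)=0$); and (ii) if $\xi+\rho$ is regular, there is a unique $w \in W$ with $w(\xi+\rho)$ strictly dominant, and then $\chi(\xi) = \sgn(w)\,\operatorname{ch} L\bigl(w(\xi+\rho)-\rho\bigr)$.

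The key identity I would establish is the Brauer--Klimyk formula
\[
\operatorname{ch} L(\lambda)\cdot \operatorname{ch} L(\mu) = \sum_{\mu' \in \Pi(\mu)} n_{\mu'}(\mu)\, \chi(\lambda+\mu').
\]
I derive it directly: writing $\operatorname{ch} L(\mu) = \sum_{\mu'\in\Pi(\mu)} n_{\mu'}(\mu)\, e^{\mu'}$ and using that $\operatorname{ch} L(\mu)$ is $W$-invariant, one has $A(\lambda+\rho)\operatorname{ch} L(\mu) = \sum_{w} \sgn(w)\, w\bigl(e^{\lambda+\rho}\operatorname{ch} L(\mu)\bigr) = \sum_{\mu'} n_{\mu'}(\mu)\, A(\lambda+\mu'+\rho)$; dividing by $A(\rho)$ gives the formula.

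Now I invoke the hypothesis. Since each $\lambda+\mu'$ is dominant and $\rho$ is strictly dominant, $\lambda+\mu'+\rho$ is \emph{regular} dominant for every $\mu' \in \Pi(\mu)$, so in property (ii) the straightening element is $w = e$ and $\chi(\lambda+\mu') = \operatorname{ch} L(\lambda+\mu')$ is a genuine irreducible character carrying no sign. The Brauer--Klimyk formula therefore reads $\operatorname{ch} L(\lambda)\cdot\operatorname{ch} L(\mu) = \sum_{\mu'\in\Pi(\mu)} n_{\mu'}(\mu)\,\operatorname{ch} L(\lambda+\mu')$. Because $\lambda$ is fixed, the assignment $\mu' \mapsto \lambda+\mu'$ is injective, so the characters on the right are pairwise distinct; comparing with $\operatorname{ch} L(\lambda)\cdot\operatorname{ch} L(\mu) = \sum_\nu m_{\lambda,\mu}^\nu\,\operatorname{ch} L(\nu)$ and using linear independence of irreducible characters yields $m_{\lambda,\mu}^{\lambda+\mu'} = n_{\mu'}(\mu)$ for all $\mu'\in\Pi(\mu)$ (and $0$ otherwise). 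As $n_{\mu'}(\mu) > 0$, this also shows $\lambda+\mu' \in X(\lambda,\mu)$, completing the argument.

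The step I expect to be most delicate is property (ii) and its application: one must check carefully that dominance of $\lambda+\mu'$ forces $\lambda+\mu'+\rho$ to be regular dominant, so that no term is annihilated by (i) and no sign is introduced. This is precisely what makes the full hypothesis ``for all $\mu'\in\Pi(\mu)$'' necessary — a single non-dominant $\lambda+\mu'$ could contribute a cancelling signed term and destroy the clean multiplicity count.
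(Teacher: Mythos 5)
Your proof is correct. The paper does not prove this lemma at all --- it is quoted from Humphreys \cite[\S 24]{Hum} --- and your argument is precisely the standard Brauer--Klimyk proof found there: the character identity $\operatorname{ch} L(\lambda)\cdot\operatorname{ch} L(\mu)=\sum_{\mu'\in\Pi(\mu)}n_{\mu'}(\mu)\,\chi(\lambda+\mu')$, followed by the observation that dominance of every $\lambda+\mu'$ makes each $\lambda+\mu'+\rho$ regular dominant, so no term vanishes or acquires a sign, and linear independence of irreducible characters gives the multiplicity count.
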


Another key ingredient in our proof is the $\textit{PRV conjecture}$ formulated as a fallout of \cite{PRV}, which was first proved by Kumar. 

\begin{theorem}\cite[Theorem 2.10]{Ku} \label{2.3}
\textit{(PRV conjecture)}
Let $G$ be a semisimple group with Weyl group $W$ over $K$. Let $\lambda,\mu$ be two dominant weights of $G$. For any $w \in W$,  $\overline{\lambda+w\mu} \in X(\lambda,\mu)$, where $\overline{\lambda+w\mu}$ is the only dominant weight in the $W$-orbit of $\lambda+w\mu$. In particular, if $\lambda+w\mu$ is dominant, then $\lambda+w\mu \in X(\lambda,\mu)$.
\end{theorem}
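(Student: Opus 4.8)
The plan is to prove the statement by passing to the Littelmann path model, which converts both the tensor product decomposition and the identification of the distinguished component into a purely combinatorial statement about piecewise-linear paths, and then to exhibit one explicit path. Recall that to each dominant weight $\mu$ one attaches the set $B(\mu)$ of Lakshmibai--Seshadri paths of shape $\mu$: piecewise-linear paths $\eta\colon[0,1]\to V$ with $\eta(0)=0$ obtained from the straight path $\pi_\mu(t)=t\mu$ by the lowering root operators, and $B(\mu)$ models the crystal of $L(\mu)$, so that $\sum_{\eta\in B(\mu)}e^{\eta(1)}=\operatorname{ch}L(\mu)$. Writing $\pi_\lambda(t)=t\lambda$ and denoting concatenation by $*$, Littelmann's generalized Littlewood--Richardson rule gives
\[
m_{\lambda,\mu}^{\nu}=\#\bigl\{\,\eta\in B(\mu)\ :\ (\pi_\lambda*\eta)(t)\in\overline{C}\ \text{for all }t,\ \ \lambda+\eta(1)=\nu\,\bigr\},
\]
where $\overline{C}=\{x\in V:\langle x,\alpha_i^\vee\rangle\ge 0\ \text{for all }i\}$ is the closed dominant chamber. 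Hence $\nu\in X(\lambda,\mu)$ as soon as one produces a single LS path $\eta$ of shape $\mu$ ending at $\nu-\lambda$ whose concatenation with $\pi_\lambda$ stays in $\overline{C}$. This reduces the theorem, taking $\nu=\overline{\lambda+w\mu}$, to constructing one such dominant concatenated path.

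The ``in particular'' case where $\lambda+w\mu$ is already dominant is then immediate: the straight extremal path $\eta(t)=t\,w\mu$ lies in $B(\mu)$ (every vertex of $W\mu$ is the endpoint of such an extremal path), and $(\pi_\lambda*\eta)$ traverses the segment $\lambda+t\,w\mu=(1-t)\lambda+t(\lambda+w\mu)$, a convex combination of two dominant weights; since $\overline{C}$ is convex the whole path remains dominant, and its endpoint $\lambda+w\mu$ is its own dominant representative. For the general case one has $\nu-\lambda\notin W\mu$, so a single segment no longer suffices and I would build a genuinely bent LS path: after replacing $w$ by its minimal-length representative modulo the stabilizer $W_\mu$ of $\mu$ (which does not change $w\mu$), successively reflect $\mu$ along a reduced expression and record the turning points, producing a path $\eta_w\in B(\mu)$ whose segment directions all lie in $W\mu$ and whose endpoint is $\nu-\lambda$, with the LS chain/integrality conditions guaranteed by integrality of the pairings $\langle\mu,\alpha^\vee\rangle$ along the Bruhat chain.

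The crux---and the reason PRV is a genuine theorem rather than a formal consequence of the path model---is to verify simultaneously that this constructed path (i) ends exactly at $\overline{\lambda+w\mu}$ and (ii) stays in $\overline{C}$ after translation by $\lambda$. The difficulty is sharply visible in the failure of the two naive attempts: translating the straight path $t\mapsto t\,w\mu$ by $\lambda$ and then applying the raising operators until the path becomes dominant only shows that \emph{some} component $L(\kappa)$ with $\kappa$ dominant and $\lambda+w\mu\preceq\kappa$ occurs, and in general $\kappa\neq\overline{\lambda+w\mu}$; conversely, the straight path to the correct endpoint is not an LS path of shape $\mu$. The technical heart is therefore the dominance estimate, namely that the bent path $\lambda+\eta_w(t)$ never leaves $\overline{C}$, which amounts to controlling $\langle\lambda+\eta_w(t),\alpha_i^\vee\rangle$ at every turning point, together with the bookkeeping confirming that the endpoint is precisely $\overline{\lambda+w\mu}$. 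I expect this combined positivity-and-endpoint verification to be the main obstacle; it is exactly what Littelmann's integrality and dominance lemmas supply (equivalently, in Kumar's original approach, what is furnished by the nonvanishing of a Demazure-module pairing, i.e.\ a cohomology-vanishing statement on $G/B$).
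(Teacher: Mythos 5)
The paper offers no proof of this statement to compare against: it is quoted as Kumar's theorem \cite[Theorem 2.10]{Ku}, whose original proof is geometric, so the only question is whether your path-model argument stands on its own. Your setup is sound and is essentially Littelmann's later route to PRV: the generalized Littlewood--Richardson rule you quote is correct, the straight path $t\mapsto t\,w\mu$ is indeed an LS path of shape $\mu$, and your convexity argument is a complete and correct proof of the ``in particular'' clause (the case where $\lambda+w\mu$ is dominant). It is worth noting that this clause is the only form of the theorem the paper ever uses --- in Lemma \ref{3.3}, in Subsection \ref{4.a}, and in all the type-by-type computations of Section \ref{4}, dominance of the relevant sum is checked before the theorem is invoked.

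As a proof of the full statement, however, there is a genuine gap, and you have located it yourself. For general $w$ you never actually construct the path $\eta_w$: ``reflect $\mu$ along a reduced expression and record the turning points'' does not by itself produce an LS path, since an LS path of shape $\mu$ consists of a decreasing chain in $W/W_\mu$ \emph{together with} rational numbers $0<a_1<\dots<a_r<1$ satisfying the chain (integrality) conditions, and the choice of those numbers is precisely where both the endpoint computation and the dominance of $\pi_\lambda*\eta_w$ can fail. Deferring the verification to ``Littelmann's integrality and dominance lemmas'' is circular in effect: those lemmas, applied to a correctly and explicitly chosen path, \emph{are} the proof of PRV; they are not a black box that validates an unspecified construction. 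Your own remark that the two naive candidates fail (raising the translated straight path only yields some $\kappa\succeq\lambda+w\mu$, while the straight path to $\overline{\lambda+w\mu}-\lambda$ is not of shape $\mu$) shows exactly why the missing step is the theorem's real content. So the case where $\lambda+w\mu$ is not dominant remains unproven; to close it you would need to carry out Littelmann's explicit construction, or else simply cite Kumar as the paper does.
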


\section{Semisimple case}\label{3}
We prove Theorem \ref{A} in this section.
\subsection{Reduction}\label{3.a}
We first reduce the general case to the case when $G$ is simply connected. Let $G$ be a connected semisimple algebraic group with a maximal torus $T$ and center $Z$. Let $G^{sc}$ be the simply connected cover of $G$ with a maximal torus $T^{sc}$ and center $Z^{sc}$. We know $G\simeq G^{sc}/Z'$, where $Z'$ is a subgroup of $Z^{sc}$. One have
\begin{center}
    $1 \longrightarrow X^*(T) \longrightarrow X^*(T^{sc}) \longrightarrow X^*(Z') \longrightarrow 1$,
\end{center}
and $X^*_+(T)=\{\lambda \in X^*_+(T^{sc}) \mid \lambda|_{Z'}=1\}$ is a subset of $X^*_+(T^{sc})$ by natural inclusion.

Recall that the functor between tensor categories $\Rep(G) \to \Rep(G^{sc})$ is fully faithful by \cite{EGNO}. Then the tensor product multiplicities $m_{\lambda,\mu}^{\nu}$ are the same for $G$ and $G^{sc}$ if $\lambda,\mu,\nu$ are dominant weights of $G$. This can also be seen in \cite[Corollary 3.6]{Ku3}. Therefore, if $L$ is perfect as a submonoid of $X^*_+(T)$, then it is also perfect as a submonoid of $X^*_+(T^{sc})$. Thus we may focus on the case when $G$ is simply connected.

\subsection{Characterization of perfect submonoids of dominant weights}\label{3.b}
Assume $G$ is simply connected in this subsection. Since $G$ is semisimple, there is a decomposition $G=G_1\times \cdot \cdot \cdot \times G_n$, where each $G_k$ is simply connected quasi-simple with a maximal torus $T_k$, center $Z_k$ and Weyl group $W_k$. Let $\Xi=\{1,...,n\}$ be the index set of quasi-simple factors. There are also corresponding decompositions of the weight lattice $X^*(T)=X^*(T_1)\bigoplus \cdot \cdot \cdot \bigoplus X^*(T_n)$ and the root lattice $Q=Q_1\bigoplus \cdot \cdot \cdot \bigoplus Q_n$. We also have
\begin{center}
    $X^*(T)/Q\simeq \bigoplus\limits_{k=1}^n X^*(T_k)/Q_k$.
\end{center}
Let the set of simple roots of $G$ be $\{\alpha_i\}_{i\in I}$ and the corresponding simple reflections be $\{s_i\}_{i\in I}$. Write $I=\bigsqcup\limits_{k=1}^n I_k$, where $I_k$ is the index set of simple roots of $G_k$. 

First we give some perfect submonoids of dominant weights.

\begin{proposition} \label{3.1}
Suppose that $G$ is a simply connected semisimple group. If $\Tilde{L}$ is a sublattice of $X^*(T)$ containing $Q$, then $\Tilde{L}\cap X^*_+(T)$ is a perfect submonoid of $X^*_+(T)$.
\end{proposition}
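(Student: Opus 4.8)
The plan is to verify the two defining properties of a perfect submonoid directly, with Lemma \ref{2.1} serving as the single essential input. First I would check that $\tilde{L} \cap X^*_+(T)$ is an additive submonoid of $X^*_+(T)$: since $\tilde{L}$ is a subgroup of $X^*(T)$ it is closed under addition and contains $0$, while $X^*_+(T)$ is closed under addition (a sum of dominant weights is dominant) and likewise contains $0$. Hence their intersection is closed under addition and contains $0$, so it is a submonoid.

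The substance is perfectness. Take $\lambda,\mu \in \tilde{L} \cap X^*_+(T)$ and let $\nu \in X(\lambda,\mu)$. By definition $\nu$ is dominant, so it suffices to show $\nu \in \tilde{L}$, and here the argument reduces to a congruence modulo the root lattice. By Lemma \ref{2.1} we may write $\nu = \lambda + \mu - \sum_{i=1}^n k_i \alpha_i$ with $k_i \in \mathbb{Z}_{\geq 0}$. Since $\lambda,\mu \in \tilde{L}$ and $\tilde{L}$ is closed under addition, $\lambda + \mu \in \tilde{L}$; and since every $\alpha_i \in Q \subseteq \tilde{L}$ by hypothesis, the correction term $\sum_{i=1}^n k_i \alpha_i$ also lies in $\tilde{L}$. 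As $\tilde{L}$ is a subgroup it is closed under subtraction, so $\nu \in \tilde{L}$, and therefore $\nu \in \tilde{L} \cap X^*_+(T)$. This gives $X(\lambda,\mu) \subseteq \tilde{L} \cap X^*_+(T)$, which is exactly the definition of perfect.

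There is no real obstacle once Lemma \ref{2.1} is available: its content is precisely that every irreducible summand $\nu$ of $L(\lambda) \otimes L(\mu)$ satisfies $\nu \equiv \lambda + \mu \pmod{Q}$, and the hypothesis $Q \subseteq \tilde{L}$ is tailored to absorb exactly this discrepancy. I would also remark that simple-connectedness plays no role in this direction; it is only the converse inclusion --- that every perfect submonoid with full component support arises from such a sublattice --- that requires more work and constitutes the harder half of Theorem \ref{A}.
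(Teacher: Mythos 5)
Your proof is correct and follows essentially the same route as the paper's: both apply Lemma \ref{2.1} to write $\nu = \lambda + \mu - \sum_i k_i \alpha_i$ and then use $Q \subseteq \tilde{L}$ together with the lattice structure of $\tilde{L}$ to conclude $\nu \in \tilde{L} \cap X^*_+(T)$. Your additional remarks (the explicit submonoid check and the observation that simple-connectedness is not needed here) are accurate but do not change the substance of the argument.
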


\begin{proof}
Let $\lambda,\mu$ be two dominant weights in $\Tilde{L}$. For any $\nu \in X(\lambda, \mu)$, by Lemma $\ref{2.1}$, we have $\nu = \lambda + \mu - \sum\limits_{i \in I} k_i \alpha_i$, where $k_i \in \mathbb{Z}_{\geq 0}$ for $i\in I$. Since $\Tilde{L}$ is a lattice containing $Q$, we have $\lambda,\mu$ and $- \sum\limits_{i \in I} k_i \alpha_i$ are all in $\Tilde{L}$. Thus $\nu$ is also in $\Tilde{L}$. Therefore, we have $\nu \in \Tilde{L} \cap X^*_+$ and $L$ is perfect.
\end{proof}

Next we focus on the necessary conditions for perfectness of a submonoid $L\subset X^*_+(T)$. By above decomposition of weight lattice, any weight $\lambda \in X^*(T)$ can be denoted by $\big(\pi_1(\lambda),...,\pi_n(\lambda)\big)$, where $\pi_k:X^*(T)\to X^*(T_k)$ is the canonical projection, for $1\leq k\leq n$. Suppose that $\lambda$ is dominant, define the \textit{support} of $\lambda$ as
\begin{center}
        $\supp(\lambda)=\{i \in I\mid \langle \lambda, \alpha_i^{\vee}  \rangle > 0\}$.
\end{center}
For any $1\leq k\leq n$, say $\lambda$ is $k$-\textit{regular} if $\supp(\lambda)\supset I_k$. If $\lambda$ is $k$-\textit{regular} for all $k$, $1\leq k\leq n$, then $\lambda$ is a $\textit{regular}$ dominant weight in $X^*(T)$.

\begin{definition}\label{3.2}
Let $G$ be a semisimple group. For any dominant weight $\lambda$ of $G$, the \textit{component support} of $\lambda$ is the set $\{1\leq k\leq n \mid \pi_k(\lambda) \text{ is nontrivial}\}$.
\end{definition} 

Let $L$ be a perfect submonoid of $X^*_+(T)$. It is clear that for any $1\leq k\leq n$, $\pi_k(L)$ is a perfect submonoid of $X^*_+(T_k)$. We claim the existence of some certain $k$-regular dominant weights in a nonzero perfect submonoid $L$ of dominant weights. 

\begin{lemma} \label{3.3}
Suppose that $G$ is simply connected semisimple and $L$ is a nonzero perfect submonoid of $X^*_+(T)$. Let $\lambda$ be a dominant weight in $L$. Then there exists a dominant weight $\omega_{\lambda} \in L$ such that for any $1\leq k\leq n$, $\omega_{\lambda}$ is $k$-regular if $\pi_k(\lambda)$ is nontrivial.
\end{lemma}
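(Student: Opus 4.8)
The plan is to reduce everything to understanding the \emph{total support}
\[
\supp(L) := \bigcup_{\mu \in L} \supp(\mu) \subseteq I.
\]
Because $\langle \mu_1 + \mu_2, \alpha_i^\vee\rangle = \langle\mu_1,\alpha_i^\vee\rangle + \langle\mu_2,\alpha_i^\vee\rangle$ is a sum of non-negative integers for dominant $\mu_1,\mu_2$, one has $\supp(\mu_1+\mu_2) = \supp(\mu_1)\cup\supp(\mu_2)$; and since $L$ is an additive monoid, any finite sum of its elements again lies in $L$, with support the union of the individual supports. Hence once I know that $I_k \subseteq \supp(L)$ for every $k$ with $\pi_k(\lambda)\neq 0$, I can take $\omega_\lambda$ to be a finite sum of elements of $L$, one chosen to be supported at each node of $\bigcup_{k:\,\pi_k(\lambda)\neq 0} I_k$. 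Such an $\omega_\lambda\in L$ then satisfies $I_k\subseteq\supp(\omega_\lambda)$ for each relevant $k$, i.e.\ is $k$-regular there, as required.

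The engine of the argument is a propagation step asserting that $\supp(L)$ is closed under adjacency in the Dynkin diagram. Suppose $i\in\supp(L)$, so there is $\mu\in L$ with $c:=\langle\mu,\alpha_i^\vee\rangle>0$, and let $j$ be any node adjacent to $i$. I would form
\[
\eta := \mu + s_i\mu = 2\mu - c\,\alpha_i .
\]
First I would check that $\eta$ is already dominant: for $m=i$ one gets $\langle\eta,\alpha_i^\vee\rangle = 2c - 2c = 0$, while for $m\neq i$ one gets $\langle\eta,\alpha_m^\vee\rangle = 2\langle\mu,\alpha_m^\vee\rangle - c\langle\alpha_i,\alpha_m^\vee\rangle \geq 0$, since $\langle\alpha_i,\alpha_m^\vee\rangle\leq 0$. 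As $\eta = \mu + s_i\mu$ is dominant, Theorem \ref{2.3} (PRV) gives $\eta\in X(\mu,\mu)$, and perfectness of $L$ then gives $\eta\in L$. Finally, adjacency of $i$ and $j$ means $\langle\alpha_i,\alpha_j^\vee\rangle<0$, so
\[
\langle\eta,\alpha_j^\vee\rangle = 2\langle\mu,\alpha_j^\vee\rangle - c\langle\alpha_i,\alpha_j^\vee\rangle \geq -c\langle\alpha_i,\alpha_j^\vee\rangle > 0,
\]
whence $j\in\supp(\eta)\subseteq\supp(L)$. Since the Dynkin diagram of each quasi-simple factor $G_k$ is connected and there are no edges between distinct factors, this propagation forces $\supp(L)$ to be a union of entire index sets $I_k$.

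To conclude: if $\pi_k(\lambda)\neq 0$, then $\pi_k(\lambda)$ is a nonzero dominant weight of $G_k$, so $\langle\lambda,\alpha_i^\vee\rangle>0$ for some $i\in I_k$, giving $\supp(\lambda)\cap I_k\neq\emptyset$ and hence $I_k\cap\supp(L)\neq\emptyset$; by the previous paragraph, $I_k\subseteq\supp(L)$. The summation described in the first paragraph then produces $\omega_\lambda$. I expect the only real subtlety to be the propagation step --- specifically the observation that $\mu+s_i\mu$ is \emph{automatically} dominant, so that invoking PRV does not force a passage to a Weyl-conjugate dominant representative, which could a priori destroy the support just gained at $j$. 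Everything else is bookkeeping with supports and the monoid structure.
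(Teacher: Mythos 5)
Your proof is correct, and it runs on the same engine as the paper's: apply PRV (Theorem \ref{2.3}) to a weight of the form $\nu + w\nu$ with $w$ a simple reflection, observe that this weight is \emph{already} dominant so that it lies in $X(\nu,\nu)\subset L$ without passing to a Weyl conjugate, and thereby push support across an edge of the Dynkin diagram. The difference is in the bookkeeping, and it is a genuine one. The paper first reduces to the quasi-simple case via the projections $\pi_k(L)$ and then makes the support of a \emph{single} element grow strictly: it uses $\mu = 2\lambda + s_j(\lambda)$ with $j\in\supp(\lambda)$ adjacent to a missing node, where the coefficient $2$ is precisely what keeps the pivot $j$ in the support (one computes $\langle \mu,\alpha_j^\vee\rangle = \langle\lambda,\alpha_j^\vee\rangle > 0$), so that $\supp(\mu)\supsetneq\supp(\lambda)$ and a finite iteration terminates in a regular element. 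Your weight $\eta = \mu + s_i\mu$ gives up the pivot ($\langle\eta,\alpha_i^\vee\rangle = 0$), which would break the paper's induction; you compensate by tracking the aggregate $\supp(L)=\bigcup_{\mu\in L}\supp(\mu)$ rather than the support of any one element, proving it is closed under Dynkin adjacency (hence a union of whole sets $I_k$), and reassembling a single $k$-regular element only at the end, using $\supp(\mu_1+\mu_2)=\supp(\mu_1)\cup\supp(\mu_2)$ and the monoid structure of $L$. What your route buys: the dominance check for $\mu+s_i\mu$ is a one-line computation with no case analysis over neighbors versus distant nodes, and no reduction to quasi-simple factors is needed, since propagation never leaves a connected component. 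What the paper's route buys: one element whose support grows monotonically, so the desired $\omega_\lambda$ falls out of the iteration directly. Both arguments hinge on the same subtlety you flag at the end: dominance of the PRV weight must be automatic, since replacing it by its dominant Weyl conjugate could destroy the support just gained at the new node.
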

\begin{proof}
It suffices to prove the lemma for quasi-simple group $G$. Indeed, suppose that for any $1\leq k\leq n$ such that $\pi_k(\lambda)$ is nontrivial, there is a $k$-regular dominant weight $\pi_k(\mu_k)\in \pi_k(L)$, where $\mu_k\in L$. Then $\sum\limits_{k, \pi_k(\lambda) \text{ is nontrivial }} \mu_k$ is a desirable dominant weight $\omega_{\lambda}\in L$.

Assume that $G$ is quasi-simple. It suffices to show that for any dominant weight $\lambda\in L$ with $\supp(\lambda)\subsetneq I$, there is another dominant weight $\mu\in L$ such that $\supp(\mu)\supsetneq \supp(\lambda)$.

Let $\mathcal{D}$ be the Dynkin diagram of $G$ and $\lambda$ be a dominant weight in $L$ with $\supp(\lambda)\subsetneq I$. There are vertices $j\in \supp(\lambda)$ and $i_1 \notin \supp(\lambda)$ such that $j$ and $i_1$ are joint with each other in $\mathcal{D}$. Then $\langle \lambda,\alpha_j^{\vee} \rangle>0$. Let $I'=\{i_1,i_2,...,i_m,j\}$ be the subset of $I$ consisting of all vertices joint with $j$ and $j$ itself. Consider the weight $\mu=2\lambda+s_j(\lambda)$. We show it is dominant.

For any $i\in I$, we have
\begin{center}
        $\langle \mu, \alpha_i^{\vee} \rangle=3\langle \lambda,\alpha_i^{\vee}\rangle -\langle \lambda,\alpha_j^{\vee}\rangle \langle \alpha_j,\alpha_i^{\vee} \rangle$.
\end{center}
If $i=j$, then $\langle \mu, \alpha_j^{\vee} \rangle=\langle \lambda, \alpha_j^{\vee} \rangle>0$. If $i\in I'\setminus \{j\}$, then $\langle \mu, \alpha_i^{\vee} \rangle> 3\langle \lambda, \alpha_i^{\vee} \rangle\geq 0$ since $\langle \alpha_j,\alpha_i^{\vee} \rangle<0$. If $i\in I\setminus I'$, then $\langle \mu, \alpha_i^{\vee} \rangle=3\langle \lambda,\alpha_i^{\vee}\rangle\geq 0$ since $\langle \alpha_j,\alpha_i^{\vee} \rangle=0$. By above computations, we have $\mu$ is dominant. Then by Theorem \ref{2.3}, we have $\mu\in X(2\lambda,\lambda)$ is contained in $L$.

Now we look at the support. Still by above computations, for $i\in I\setminus I'$, we have $i\in \supp(\mu)$ if and only if $i\in \supp(\lambda)$. We also have $\supp(\mu)$ contains $I'$ while $i_1\notin \supp(\lambda)$. Therefore, we have $\supp(\mu)\supsetneq \supp(\lambda)$ and the lemma is proved.
\end{proof}

Based on above property of $\omega_{\lambda}$ and the fact that $\Pi(\lambda)$ is a finite set, we have a direct corollary.

\begin{corollary} \label{3.4}
Suppose that $G$ is simply connected semisimple and $L$ is a nonzero perfect submonoid of $X^*_+(T)$. Let $\lambda$ be a dominant weight in $L$. Then there is a positive integer $m$ such that $\mu + m \omega_{\lambda} \in L$ for any weight $\mu \in \Pi(\lambda)$.
\end{corollary}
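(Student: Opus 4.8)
The plan is to pick $m$ so large that $\mu+m\omega_\lambda$ is dominant \emph{simultaneously} for every $\mu\in\Pi(\lambda)$, and then to exhibit all of these weights as constituents of $L(m\omega_\lambda)\otimes L(\lambda)$ by Lemma \ref{2.2}, at which point perfectness of $L$ finishes the argument. First I would record that $\omega_\lambda\in L$ (Lemma \ref{3.3}) and that $L$ is a monoid, so $m\omega_\lambda\in L$ for every positive integer $m$. To control dominance, for a root index $i\in I_k$ I compute
\[
\langle \mu+m\omega_\lambda,\alpha_i^\vee\rangle=m\langle\omega_\lambda,\alpha_i^\vee\rangle+\langle\mu,\alpha_i^\vee\rangle .
\]
When $\pi_k(\lambda)$ is nontrivial, Lemma \ref{3.3} makes $\omega_\lambda$ $k$-regular, so $\langle\omega_\lambda,\alpha_i^\vee\rangle\ge 1$; since $\Pi(\lambda)$ is a finite set the numbers $\langle\mu,\alpha_i^\vee\rangle$ are bounded below, and the first term wins once $m$ exceeds $-\langle\mu,\alpha_i^\vee\rangle$ over the finitely many relevant pairs $(i,\mu)$.

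The remaining directions are those $i\in I_k$ with $\pi_k(\lambda)$ trivial, where Lemma \ref{3.3} gives no regularity and so the previous mechanism breaks down. Here I would instead use the factorization of $L(\lambda)$ as an external tensor product over the quasi-simple factors $G_1\times\cdots\times G_n$: its weights are sums of weights of the factors, and the factor $L(\pi_k(\lambda))=L(0)$ is trivial, so every $\mu\in\Pi(\lambda)$ satisfies $\pi_k(\mu)=0$ and hence $\langle\mu,\alpha_i^\vee\rangle=0$ for $i\in I_k$. Since $\langle\omega_\lambda,\alpha_i^\vee\rangle\ge 0$ by dominance of $\omega_\lambda$, the pairing $\langle\mu+m\omega_\lambda,\alpha_i^\vee\rangle$ is nonnegative for \emph{every} $m$ in these directions. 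Combining the two cases, a single sufficiently large $m$ makes $\mu+m\omega_\lambda$ dominant for all $\mu\in\Pi(\lambda)$ at once.

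With such an $m$ fixed, I would apply Lemma \ref{2.2} to the two dominant weights $m\omega_\lambda$ and $\lambda$ (playing the roles of the lemma's $\lambda$ and $\mu$): its hypothesis is precisely that $m\omega_\lambda+\mu'$ be dominant for every $\mu'\in\Pi(\lambda)$, which we have just arranged, so $m\omega_\lambda+\mu'\in X(m\omega_\lambda,\lambda)$ for all $\mu'\in\Pi(\lambda)$. Finally, because $m\omega_\lambda,\lambda\in L$ and $L$ is perfect, $X(m\omega_\lambda,\lambda)\subset L$, yielding $\mu+m\omega_\lambda\in L$ for every $\mu\in\Pi(\lambda)$, as claimed. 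The argument is largely routine once Lemma \ref{3.3} is available; the one place that genuinely requires care—what I expect to be the only real obstacle—is the uniform choice of $m$ in the directions $i\in I_k$ with $\pi_k(\lambda)$ trivial, since there strict positivity cannot be forced by enlarging $m$ and one must instead observe that the offending weight-components vanish identically on $\Pi(\lambda)$.
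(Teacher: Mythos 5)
Your proof is correct and is essentially the argument the paper intends: the paper states this as a ``direct corollary'' of Lemma \ref{3.3} and the finiteness of $\Pi(\lambda)$, with Lemma \ref{2.2} (stated in the preliminaries precisely for this purpose) supplying the membership $m\omega_\lambda+\mu'\in X(m\omega_\lambda,\lambda)$ once $m$ is chosen large enough for simultaneous dominance, after which perfectness of $L$ concludes. Your careful treatment of the directions $i\in I_k$ with $\pi_k(\lambda)$ trivial---observing via the external tensor factorization of $L(\lambda)$ that $\pi_k(\mu)=0$ for all $\mu\in\Pi(\lambda)$, so dominance there is automatic---is a detail the paper glosses over but which you handle correctly.
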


We also need the following technical proposition, which will be proved in Section \ref{4}.

\begin{proposition} \label{3.5}
Suppose that $G$ is simply connected semisimple. If $L$ is a nonzero perfect submonoid of $X_+^*(T)$, then for any $\lambda \in L$, all the dominant weights in $\Pi(\lambda)$ are contained in $L$.
\end{proposition}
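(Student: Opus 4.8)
The plan is to prove Proposition \ref{3.5} in two stages: first establish that $L$ is closed under taking duals, i.e.\ $\nu\in L\implies\nu^*\in L$; then run a \emph{shift-and-descend} argument that combines the special weight $\omega_\lambda$ of Lemma \ref{3.3} with this duality to pull a dominant $\mu\in\Pi(\lambda)$ into $L$.

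For the duality stage, fix $\nu\in L$, put $V=L(\nu)$ and $d=\dim V$. Since $L$ is perfect it is closed under taking summands of tensor products, so by induction the highest weight of every irreducible summand of $V^{\otimes k}$ lies in $L$. In characteristic $0$ the antisymmetrizer is a $G$-equivariant idempotent, so the exterior power $\bigwedge^{d-1}V$ is a direct summand of $V^{\otimes(d-1)}$; hence the highest weights of all its constituents lie in $L$. The key point is that $G$ is semisimple and therefore has no nontrivial characters, so $\det V=\bigwedge^{d}V$ is the trivial representation. The natural pairing $\bigwedge^{d-1}V\otimes V\to\bigwedge^{d}V$ is then a perfect $G$-invariant pairing, whence $\bigwedge^{d-1}V\cong V^*=L(\nu^*)$. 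Thus $L(\nu^*)$ is a summand of $V^{\otimes(d-1)}$ and $\nu^*\in L$.

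For the descent stage, let $\mu\in\Pi(\lambda)$ be dominant. By Lemma \ref{3.3} there is $\omega_\lambda\in L$ that is $k$-regular on every component on which $\lambda$ is nontrivial, and by Corollary \ref{3.4} there is $m>0$ with $\mu+m\omega_\lambda\in L$ (and dominant). Applying the duality stage to $m\omega_\lambda\in L$ gives $(m\omega_\lambda)^*=m\omega_\lambda^*\in L$. It remains to see that $\mu$ is a summand of $L(\mu+m\omega_\lambda)\otimes L(m\omega_\lambda)^*$. By the adjunction $(-\otimes L(m\omega_\lambda))\dashv(-\otimes L(m\omega_\lambda)^*)$,
\[
\dim\Hom_G\!\big(L(\mu),\,L(\mu+m\omega_\lambda)\otimes L(m\omega_\lambda)^*\big)=\dim\Hom_G\!\big(L(\mu)\otimes L(m\omega_\lambda),\,L(\mu+m\omega_\lambda)\big),
\]
and $\mu+m\omega_\lambda$ is the highest weight of $L(\mu)\otimes L(m\omega_\lambda)$, so the right-hand side equals the Cartan multiplicity $m_{\mu,m\omega_\lambda}^{\mu+m\omega_\lambda}=1$. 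Hence $\mu\in X(\mu+m\omega_\lambda,(m\omega_\lambda)^*)$; as both arguments lie in $L$ and $L$ is perfect, $\mu\in L$. Alternatively one may descend one step at a time: for $1\le j\le m$ the weight $\mu+(j-1)\omega_\lambda=\overline{(\mu+j\omega_\lambda)+w_0\omega_\lambda^*}$ is already dominant, so Theorem \ref{2.3} gives $\mu+(j-1)\omega_\lambda\in X(\mu+j\omega_\lambda,\omega_\lambda^*)\subset L$, and one iterates from $j=m$ down to $j=0$.

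The main obstacle is the duality stage. One might hope that any submonoid closed under tensor summands is automatically closed under duals, but this is false in general: for a torus the non-negatively graded representations form such a submonoid that is not closed under duals. What rescues the situation is exactly the semisimplicity of $G$, which forces $\det L(\nu)$ to be trivial and hence yields the exterior-power isomorphism $\bigwedge^{d-1}V\cong V^*$; granting this, the remainder is a routine combination of Lemma \ref{3.3}, Corollary \ref{3.4}, and the fact that the Cartan component of a tensor product occurs with multiplicity one.
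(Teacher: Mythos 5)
Your proof is correct, and it follows a genuinely different route from the paper's. The paper (Section \ref{4}) fixes a dominant $\mu\in\Pi(\lambda)$ and constructs a dominant weight $\eta\in L$ with $\mu+\eta\in L$ and $w_0(\eta)=-\eta$, so that Theorem \ref{2.3} (PRV) yields $\mu=\overline{(\mu+\eta)+w_0\eta}\in X(\mu+\eta,\eta)\subset L$; the entire burden of that proof is producing this \emph{self-dual} $\eta$, which requires bookkeeping over the quasi-simple factors (Lemmas \ref{a.2}, \ref{a.3}) and an explicit type-by-type computation (Lemma \ref{a.1}) --- immediate in the types where $w_0=-1$, but needing carefully chosen sequences of weights in types $A_n$, $D_{2n+1}$ and $E_6$. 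Your argument replaces that whole construction by one soft representation-theoretic fact: a perfect submonoid of a semisimple group is automatically closed under $\nu\mapsto\nu^*$, since in characteristic $0$ the summand $\bigwedge^{d-1}L(\nu)$ of $L(\nu)^{\otimes(d-1)}$ is isomorphic to $L(\nu)^*$ (connected semisimple $G$ has no nontrivial characters, so $\bigwedge^{d}L(\nu)$ is trivial and the wedge pairing is a perfect invariant pairing), while perfection propagates membership in $L$ through tensor powers. Once duals are available, $\eta$ need not be self-dual: taking $\eta=m\omega_\lambda$ from Corollary \ref{3.4}, the cancellation $\mu+\eta+w_0(\eta^*)=\mu$ is achieved either by your adjunction/Cartan-component computation (which avoids PRV in this step altogether) or by your iterated PRV descent (which should terminate at $j=1$, a harmless off-by-one in your phrasing). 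Both hypotheses of perfection are legitimately available since $\mu+m\omega_\lambda$ and $(m\omega_\lambda)^*$ are dominant and lie in $L$, and there is no circularity: Lemma \ref{3.3} and Corollary \ref{3.4} are proved before Proposition \ref{3.5} and independently of it. The trade-off: your proof is uniform in type and collapses all of Subsection \ref{4.c}, at the cost of importing exterior-algebra input; the paper's proof stays entirely within the combinatorics of the weight monoid and PRV, which makes it more elementary in its toolkit but far longer, and since the weights constructed in Lemma \ref{a.1} are used nowhere else in the paper, nothing is lost by your shortcut.
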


For the proof of Proposition \ref{3.5} and our later discussions, we cannot reduce them directly to the case when $G$ is quasi-simple. This is because $\big(\pi_1(\lambda_1),\pi_2(\lambda_2),...,\pi_n(\lambda_n)\big)$ may not be in $L$ even if $\lambda_1,...,\lambda_n$ are all in $L$.

\begin{definition}\label{3.6}
Let $L$ be a submonoid of $X^*_+(T)$, the \textit{component support} of $L$ is the set $\{1\leq k\leq n \mid \pi_k(L)\neq \{0\}\}$.
If the component support of $L$ is equal to $\{1,2,...,n\}$, then $L$ is said to have \textit{full component support}. In particular, when $G$ is quasi-simple, every nonzero submonoid of $X^*_+(T)$ has full component support.
\end{definition}

We first restrict ourselves to perfect submonoids of $X^*_+(T)$ with full component support.

\begin{lemma}\label{3.7}
Suppose that $G$ is simply connected semisimple. If $L$ is a perfect submonoid of $X^*_+(T)$ with full component support, then for any $1\leq k\leq n$, we have $Q_{k}\cap X^*_+(T)$ is contained in $L$. In particular, $Q\cap X^*_+(T)$ is contained in $L$.
\end{lemma}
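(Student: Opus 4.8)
The plan is to produce, for each index $k$, a single weight $\theta_k \in L$ that is supported on the $k$-th factor, lies in $Q_k$, and is regular there (i.e. $\langle \theta_k, \alpha_i^\vee\rangle > 0$ for all $i \in I_k$). Granting such a $\theta_k$, Proposition \ref{3.5} will let me descend to every dominant weight of $Q_k$ lying below a large multiple of $\theta_k$, and these exhaust $Q_k \cap X^*_+(T)$.

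First I would construct $\theta_k$. Since $L$ has full component support, there is $\lambda \in L$ with $\pi_k(\lambda) \neq 0$; Lemma \ref{3.3} then furnishes a $k$-regular weight $\omega \in L$, so that $\pi_k(\omega)$ is a regular dominant weight of $G_k$. Let $N = |X^*(T)/Q|$, which is finite since $G$ is semisimple; then $N\omega \in L$ and $N\omega \in Q$, so every component $\pi_j(N\omega)$ lies in $Q_j$. Using the product decomposition $\Pi(N\omega) = \prod_j \Pi(\pi_j(N\omega))$ arising from $L(N\omega) = \bigboxtimes_j L(\pi_j(N\omega))$, together with the fact that, for a dominant weight $\sigma$ of a quasi-simple factor $G_j$, one has $0 \in \Pi(\sigma)$ precisely when $\sigma \in Q_j$, the weight $\theta_k$ obtained from $N\omega$ by replacing each component $\pi_j(N\omega)$ with $j \neq k$ by $0$ lies in $\Pi(N\omega)$ and is dominant. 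By Proposition \ref{3.5}, $\theta_k \in L$; by construction it is supported on the $k$-th factor, lies in $Q_k$, and $\pi_k(\theta_k) = N\pi_k(\omega)$ is regular.

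Next, given an arbitrary $\nu \in Q_k \cap X^*_+(T)$ (so $\nu$ is dominant, supported on factor $k$, and lies in $Q_k$), I would choose $m$ large enough that $m\theta_k - \nu$ is dominant: this is possible because $\theta_k$ is regular on factor $k$, while both $\theta_k$ and $\nu$ pair trivially with $\alpha_i^\vee$ for $i \notin I_k$. Since $m\theta_k - \nu \in Q_k$ is then a dominant element of the root lattice, it is a non-negative integral combination of simple roots, so $\nu \preceq m\theta_k$; as $\nu$ is dominant this gives $\nu \in \Pi(m\theta_k)$. Because $m\theta_k \in L$ and $L$ is perfect, Proposition \ref{3.5} yields $\nu \in L$. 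This proves $Q_k \cap X^*_+(T) \subseteq L$ for every $k$. The final assertion follows since any dominant $\nu \in Q$ decomposes as $\nu = \sum_k \pi_k(\nu)$ with each $\pi_k(\nu) \in Q_k \cap X^*_+(T) \subseteq L$, and $L$ is closed under addition.

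The main obstacle is the construction of $\theta_k$: it hinges on simultaneously (i) upgrading an arbitrary generator to a factor-$k$-regular weight via Lemma \ref{3.3}, (ii) passing to the multiple $N\omega$ so that the remaining components land in their root lattices and $0$ becomes an admissible weight in each of those factors, and (iii) invoking Proposition \ref{3.5} — the technical input proved in Section \ref{4} — both to isolate the $k$-th factor and, afterwards, to descend to all dominant weights of $Q_k$. The remaining points (dominance of $m\theta_k - \nu$ and the characterization of when $0$ is a weight) are routine.
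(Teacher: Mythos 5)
Your proof is correct, and its skeleton matches the paper's: regularize via Lemma \ref{3.3}, pass to a multiple lying in the root lattice, use the fact that a dominant element of the root lattice is a $\mathbb{Z}_{\geq 0}$-combination of simple roots to produce a relation $\preceq$, and finish with Proposition \ref{3.5}. The one place you diverge is the intermediate weight $\theta_k$: you first kill the components away from the $k$-th factor, which costs you the factorization $\Pi(N\omega)=\prod_j \Pi\bigl(\pi_j(N\omega)\bigr)$ coming from the external tensor product, the criterion that $0\in\Pi(\sigma)$ exactly when $\sigma$ lies in the root lattice, and a second application of Proposition \ref{3.5}. The paper skips this isolation step entirely: it takes a \emph{fully} regular $\omega_\lambda\in L$ (which full component support provides through Lemma \ref{3.3}) and $m$ large with $m\omega_\lambda\in Q$, and observes that $m\omega_\lambda-\mu$ is already a $\mathbb{Z}_{\geq 0}$-combination of simple roots --- on the $k$-th factor by the choice of $m$, and on every other factor because $m\pi_j(\omega_\lambda)$ is itself a dominant element of $Q_j$ --- so $\mu\preceq m\omega_\lambda$ and a single application of Proposition \ref{3.5} finishes. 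What your detour buys is a slightly stronger per-factor statement: your argument gives $Q_k\cap X^*_+(T)\subset L$ assuming only $\pi_k(L)\neq\{0\}$ rather than full component support, and it needs only $k$-regularity of $\omega$; the paper's one-shot argument is shorter but leans on a weight that is regular on every factor.
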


\begin{proof}
Let $\mu$ be arbitrary in $Q_k\cap X^*_+(T)$. Since $L$ has full component support, there is a dominant weight $\lambda$ in $L$ with full component support. By Lemma \ref{3.3}, there is a regular dominant weight $\omega_{\lambda}=\big(\pi_{1}(\omega_{\lambda}),...,\pi_{n}(\omega_{\lambda})\big)$ in $L$. We know that $\omega_{\lambda}$ is a $\BQ_{\geq 0}$-combination of simple roots. Then one can take a positive integer $m$ such that $m\omega_{\lambda} \in Q\cap X^*_+(T)$. Moreover, since $\omega_{\lambda}$ is regular, we have $\langle \omega_{\lambda}, \alpha_i^{\vee} \rangle >0$ for any $i\in I_k$. One can take $m$ large enough such that $\langle m\omega_{\lambda}, \alpha_i^{\vee} \rangle\geq \langle \mu, \alpha_i^{\vee} \rangle$ for any $i\in I_k$. Then $m\pi_k(\omega_{\lambda})-\mu$ is a $\BZ_{\geq 0}$-combination of simple roots in $Q_k$. Moreover, we have $m\omega_{\lambda}-\mu:=\big(m\pi_1(\omega_\lambda),...,m\pi_{k-1}(\omega_\lambda),m\pi_k(\omega_{\lambda})-\mu,m\pi_{k+1}(\omega_\lambda),...,m\pi_{n}(\omega_\lambda)\big)$ is a $\BZ_{\geq 0}$-combination of simple roots and thus $\mu\preceq m\omega_{\lambda}$. Then $\mu \in \Pi(m\omega_{\lambda})\cap X^*_+(T)$ is in $L$ by Proposition \ref{3.5}. Therefore, $Q_k\cap X^*_+(T)$ is contained in $L$.

In particular, since $L$ is a submonoid of $X^*_+(T)$, we have $Q\cap X^*_+(T)$ is contained in $L$ by adding $Q_k\cap X^*_+(T)$ for $1\leq k\leq n$.
\end{proof}

Based on the above lemma, we use the cocenter to characterize the perfect submonoids of $X^*_+(T)$. Consider the canonical projection map $p:X^*(T)\to X^*(T)/Q$. If $L$ is a perfect submonoid of $X^*_+(T)$ with full component support, then $p(L)$ is a subgroup of $X^*(T)/Q$ since $X^*(T)/Q$ is finite.

\begin{proposition}\label{3.8}
Suppose that $G$ is simply connected semisimple and $L$ is a perfect submonoid of $X^*_+(T)$ with full component support. Then $L=p^{-1}(\CL)\cap X^*_+(T)$ for some subgroup $\CL$ of $X^*(T)/Q$.

\end{proposition}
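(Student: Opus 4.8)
The plan is to take $\CL = p(L)$ as the required subgroup. As observed just before the statement, $p(L)$ is indeed a subgroup of $X^*(T)/Q$, because $X^*(T)/Q$ is finite and $p(L)$ is a submonoid of it. The inclusion $L \subseteq p^{-1}(\CL) \cap X^*_+(T)$ is immediate, since $L \subseteq X^*_+(T)$ and $p(L) = \CL$. The whole content is therefore the reverse inclusion $p^{-1}(\CL) \cap X^*_+(T) \subseteq L$: given a dominant weight $\nu$ with $p(\nu) \in p(L)$, I must show that $\nu \in L$.

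So I would fix such a $\nu$ and choose $\lambda \in L$ with $p(\nu) = p(\lambda)$, i.e. $\nu - \lambda \in Q$. The idea is to dominate $\nu$ by an element of $L$ in the order $\preceq$ and then invoke the downward-closure Proposition \ref{3.5}. Since $L$ has full component support, Lemma \ref{3.3} furnishes a regular dominant weight $\omega \in L$; a regular dominant weight is a strictly positive rational combination of all the simple roots, because each fundamental weight is a positive combination of the simple roots in its quasi-simple factor. Writing $\nu - \lambda = \sum_{i \in I} c_i \alpha_i$ with $c_i \in \BZ$ and choosing $m$ to be a large multiple of $|X^*(T)/Q|$, the weight $m\omega$ lies in $Q$ and equals $\sum_{i\in I} d_i \alpha_i$ with $d_i \in \BZ_{>0}$, each $d_i$ as large as we please. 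Then
\[
(\lambda + m\omega) - \nu = \sum_{i\in I} (d_i - c_i)\,\alpha_i ,
\]
and for $m$ large every coefficient $d_i - c_i$ is a nonnegative integer, so $\nu \preceq \lambda + m\omega$.

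Now $\lambda + m\omega \in L$, because $L$ is a monoid containing both $\lambda$ and $\omega$, and $\nu$ is a dominant weight with $\nu \preceq \lambda + m\omega$, hence $\nu \in \Pi(\lambda + m\omega)$ by the saturation facts recalled in the preliminaries. Applying Proposition \ref{3.5} to $\lambda + m\omega \in L$ then yields $\nu \in L$, which establishes the reverse inclusion and finishes the proof.

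I expect the only genuinely delicate point to be the construction of the dominating weight $\lambda + m\omega$: it is essential that $\omega$ be \emph{regular}, so that a large multiple of it overwhelms, coordinate by coordinate in the simple-root basis, the possibly negative coefficients $c_i$ of $\nu - \lambda$. This is exactly where full component support enters, through Lemma \ref{3.3}, and where Proposition \ref{3.5} does the real work of converting the relation $\nu \preceq \lambda + m\omega$ into membership $\nu \in L$.
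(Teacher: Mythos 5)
Your proposal is correct and follows essentially the same route as the paper: take $\CL=p(L)$, obtain a regular dominant weight $\omega\in L$ from full component support via Lemma \ref{3.3} (as in the proof of Lemma \ref{3.7}), use the monoid property to form $\lambda+m\omega\in L$ with $m\omega\in Q$ and $m$ large so that $\nu\preceq\lambda+m\omega$, and then conclude $\nu\in L$ by Proposition \ref{3.5}. The only cosmetic difference is that you verify $\nu\preceq\lambda+m\omega$ coefficient-by-coefficient in the simple-root basis, whereas the paper notes that $\lambda-\nu+m\omega$ is a dominant element of $Q$ and hence automatically a $\BZ_{\geq 0}$-combination of simple roots; these are the same observation.
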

\begin{proof}
Let $\CL=p(L)$ be a subgroup of $X^*(T)/Q$. By definition we have $L\subset p^{-1}(\CL)\cap X^*_+(T)$. Then it suffices to show:
\begin{center}
    For any $a \in \CL$, $p^{-1}(a)\cap X^*_+(T)$ is contained in $L$.
\end{center}
Indeed, by our choice of $\CL$, there exists $\lambda \in L$ such that $p(\lambda)=a$. Let $\mu$ be an arbitrary dominant weight in $p^{-1}(a)$. Then $\lambda-\mu\in Q$. By same argument as in the proof of Lemma \ref{3.7}, there exists a regular dominant weight $\omega\in L$. Then there is a positive integer $m$ such that $\lambda-\mu+m\omega$ is dominant by regularity of $\omega$. Then $\lambda-\mu+m\omega \in Q\cap X^*_+(T)$ is a $\BZ_{\geq 0}$-combination of simple roots. By Proposition \ref{3.5}, $\mu \in \Pi(\lambda+m\omega)\cap X^*_+(T)$ is in $L$. Therefore, we have $p^{-1}(a)\cap X^*_+(T)\subset L$ and $L=p^{-1}(\CL)\cap X^*_+(T)$.
\end{proof}

Based on above, we can give the characterization of perfect submonoids of dominant weights. 

\begin{proposition}\label{3.9}
Let $G$ be a simply connected semisimple group. The perfect submonoids of $X^*_+(T)$ with full component support are exactly $\Tilde{L}\cap X^*_+(T)$, where $\Tilde{L}$ is any sublattice of $X^*(T)$ containing $Q$.
\end{proposition}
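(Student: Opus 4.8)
The statement is an equality of two families of submonoids, so the plan is to prove the two inclusions separately, using Proposition \ref{3.1} for one direction and Proposition \ref{3.8} for the other, glued together by the elementary correspondence between subgroups of $X^*(T)/Q$ and sublattices of $X^*(T)$ containing $Q$.

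First I would treat the inclusion ``$\supseteq$'': every submonoid of the form $\Tilde{L}\cap X^*_+(T)$, with $\Tilde{L}$ a sublattice containing $Q$, is perfect with full component support. Perfectness is exactly Proposition \ref{3.1}, so only full component support needs checking. Since $\Tilde{L}\supseteq Q=Q_1\oplus\cdots\oplus Q_n$, we have $Q\cap X^*_+(T)\subseteq \Tilde{L}\cap X^*_+(T)$, and it suffices to exhibit for each $k$ a nonzero dominant weight supported in the $k$-th factor. I would take the sum of the positive roots of $G_k$, placed in the $k$-th coordinate and zero elsewhere: it lies in $Q_k$, is dominant for $G$ (its pairing with $\alpha_i^\vee$ is $2$ for $i\in I_k$ and $0$ for $i\notin I_k$), and has nonzero image under $\pi_k$. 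Hence $\pi_k(\Tilde{L}\cap X^*_+(T))\neq\{0\}$ for every $k$.

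For the reverse inclusion ``$\subseteq$'', let $L$ be any perfect submonoid with full component support. Proposition \ref{3.8} gives $L=p^{-1}(\CL)\cap X^*_+(T)$ for a subgroup $\CL$ of $X^*(T)/Q$, where $p\colon X^*(T)\to X^*(T)/Q$ is the projection. I would then set $\Tilde{L}:=p^{-1}(\CL)$ and observe that it is a subgroup of $X^*(T)$ containing $\ker p=Q$; as a subgroup of the free abelian group $X^*(T)$ it is itself a sublattice. Thus $L=\Tilde{L}\cap X^*_+(T)$ with $\Tilde{L}$ a sublattice containing $Q$, as required.

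I do not expect a real obstacle: the substantive content has already been packaged into Proposition \ref{3.8}, which in turn rests on Proposition \ref{3.5}, and the correspondence-theorem step is purely formal. The only place demanding a short verification is the full-component-support claim, and there the cleanest route is to name an explicit weight in each root sublattice $Q_k$ rather than to argue existence abstractly.
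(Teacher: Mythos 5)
Your proposal is correct and follows essentially the same route as the paper: Proposition \ref{3.1} for one inclusion, Proposition \ref{3.8} plus the identification $\Tilde{L}=p^{-1}(\CL)\supseteq p^{-1}(0)=Q$ for the other. The only difference is cosmetic: where the paper asserts full component support of $Q\cap X^*_+(T)$ as clear, you verify it explicitly via the sum of positive roots of each $G_k$ (which pairs to $2$ with each $\alpha_i^\vee$, $i\in I_k$), a valid and slightly more self-contained check.
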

\begin{proof}
By Proposition \ref{3.1}, the intersection of sublattices of $X^*(T)$ containing $Q$ with $X^*_+(T)$ are perfect. Moreover, these perfect submonoids clearly have full component support since $Q\cap X^*_+(T)$ has full component support.

Let $L$ be a perfect submonoid of $X^*_+(T)$ with full component support. By Proposition \ref{3.8}, $L=p^{-1}(\CL)\cap X^*_+(T)$ for some subgroup $\CL$ of $X^*(T)/Q$. We also have $p^{-1}(\CL)$ is a subgroup of $X^*(T)$. Moreover, $p^{-1}(\CL)$ contains $p^{-1}(0)=Q$. Therefore, the perfect submonoid $L$ is the intersection of a sublattice $p^{-1}(\CL)$ of $X^*(T)$ containing $Q$ with $X^*_+(T)$.
\end{proof}

\subsection{Reformulation of the characterization} \label{3.c}
In Proposition $\ref{3.8}$, we relates our perfect submonoids of $X^*_+(T)$ with the cocenter of $G$. Now we give a reformulation of perfect submonoids of dominant weights using central characters. Still assume $G$ is simply connected in this subsection. Keep the notations in Subsection \ref{3.b}. 

Let $L$ be an arbitrary perfect submonoid of $X^*_+(T)$ with full component support. Define a subset $Z_L$ of $Z$ as
\begin{center}
    $Z_L=\{z \in Z \mid \lambda(z)=1 ,\forall \lambda \in L\}$.
\end{center}
Since $Z_L=\bigcap\limits_{\lambda \in L} \Ker(\lambda|_{Z})$, we have that $Z_L$ is a subgroup of $Z$.

Conversely, let $Z'$ be an arbitrary subgroup of $Z$. Define a subset $L_{Z'}$ of $X^*_+(T)$ as
\begin{center}
    $L_{Z'}=\{\lambda\in X^*_+(T) \mid \lambda|_{Z'}=1\}$.
\end{center}
Then $L_{Z'}$ is a perfect submonoid of $X^*_+(T)$ with full component support. Indeed, there is a unique (up to isomorphism) connected algebraic group $G'$ with simply connected cover $G$ such that $G'\simeq G/Z'$. By \cite[\S 1.2]{BT}, the maximal torus $T'$ of $G'$ satifying
\begin{center}
    $1 \longrightarrow Z' \longrightarrow T \longrightarrow T' \longrightarrow 1$
\end{center}
gives rise to
\begin{center}
    $1 \longrightarrow X^*(T') \longrightarrow X^*(T) \longrightarrow X^*(Z') \longrightarrow 1$,
\end{center}
and $X^*_+(T')= \{\lambda\in X^*_+(T) \mid \lambda|_{Z'}=1 \}=L_{Z'}$. Moreover, as weights in $Q\cap X^*_+(T)$ act trivially on $Z\supset Z'$, we have $L_{Z'}\supset Q\cap X^*_+(T)$ and $L_{Z'}$ is a perfect submonoid of $X^*_+(T)$ with full component support.

\begin{proposition}\label{3.10}
Let $G$ be a simply connected semisimple group. The maps $\varphi: L\mapsto Z_L$, $\psi:Z'\mapsto L_{Z'}$ give a natural bijection between the perfect submonoids of $X^*_+(T)$ with full component support and the subgroups of $Z$.
\end{proposition}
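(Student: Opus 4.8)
The plan is to factor both $\varphi$ and $\psi$ through the description of Proposition \ref{3.9} and then reduce the statement to Pontryagin duality for the finite abelian group $Z$. I would use two facts. First, since $G$ is simply connected we have $X^*(T)=P$, and restriction of characters induces an isomorphism $X^*(T)/Q \xrightarrow{\sim} X^*(Z)$; in particular, for $\lambda\in X^*(T)$ and $z\in Z$ the scalar $\lambda(z)$ depends only on the class $p(\lambda)$. Second, every coset of $Q$ in $X^*(T)$ contains a dominant weight: for a regular dominant weight $\delta\in Q\cap X^*_+(T)$ (for instance $\delta=2\rho$) and any $\nu$, the weight $\nu+m\delta$ is dominant for $m\gg 0$ and congruent to $\nu$ modulo $Q$. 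Hence $p$ restricts to a surjection $X^*_+(T)\to X^*(T)/Q$, and the same holds after intersecting with any sublattice containing $Q$.

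Next I would identify $\varphi$ and $\psi$ with annihilator maps. For a perfect submonoid $L$ with full component support, the first fact gives $Z_L=\{z\in Z\mid \chi(z)=1,\ \forall\,\chi\in p(L)\}=p(L)^{\perp}$, the annihilator of the subgroup $p(L)\subseteq X^*(Z)$ inside $Z$. For a subgroup $Z'\le Z$ I claim $p(L_{Z'})=(Z')^{\perp}$, the annihilator of $Z'$ inside $X^*(Z)\cong X^*(T)/Q$: the inclusion $\subseteq$ is immediate, while for $\supseteq$ one lifts a character in $(Z')^{\perp}$ to a coset of $Q$ all of whose elements are trivial on $Z'$ and applies the surjectivity above to produce a dominant representative, which then lies in $L_{Z'}$. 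Thus, under the bijection of Proposition \ref{3.9}, the map $\psi$ corresponds to $Z'\mapsto (Z')^{\perp}$.

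Finally I would assemble the bijection. Proposition \ref{3.9} (through Proposition \ref{3.8}) already furnishes a bijection $L\leftrightarrow p(L)$ between perfect submonoids with full component support and subgroups of $X^*(T)/Q\cong X^*(Z)$, with inverse $\CL\mapsto p^{-1}(\CL)\cap X^*_+(T)$. The annihilator operations between subgroups of the finite abelian group $X^*(Z)$ and subgroups of its dual $Z$ are mutually inverse, i.e.\ $(\CL^{\perp})^{\perp}=\CL$ and $((Z')^{\perp})^{\perp}=Z'$. Since $\varphi=(-)^{\perp}\circ p$ and $\psi$ is $(-)^{\perp}$ followed by the inverse of $p$, composing the two shows $\varphi\circ\psi=\mathrm{id}$ and $\psi\circ\varphi=\mathrm{id}$, so $\varphi$ and $\psi$ are mutually inverse and give the desired natural bijection.

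I expect the main obstacle to be the claim $p(L_{Z'})=(Z')^{\perp}$, namely that $L_{Z'}$ already realizes by a genuinely dominant weight every central character trivial on $Z'$. This is precisely the point where dominance must be reconciled with the purely lattice-theoretic duality, and it is the surjectivity of $p$ on $X^*_+(T)$, together with the characterization of Proposition \ref{3.9}, that removes the difficulty.
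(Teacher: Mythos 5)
Your proof is correct, but it takes a genuinely different route from the paper's. The paper proves Proposition \ref{3.10} via the structure theory of isogenies: starting from Proposition \ref{3.9} it writes $L=\Tilde{L}\cap X^*_+(T)$, invokes the existence and uniqueness (Borel--Tits) of a connected semisimple group $G'$ with simply connected cover $G$ and $X^*(T')=\Tilde{L}$, identifies $G'\simeq G/Z'$ to get surjectivity and injectivity of $\psi$, and then runs a short formal (Galois-connection style) argument --- $L\subset(\psi\circ\varphi)(L)\subset\psi(Z')=L$, followed by injectivity of $\psi$ --- to conclude that $\varphi$ and $\psi$ are mutually inverse. You instead bypass the auxiliary groups $G'$ entirely: you identify $X^*(T)/Q\cong X^*(Z)$, observe that every $Q$-coset has a dominant representative (the $2\rho$ trick), so that Propositions \ref{3.8}/\ref{3.9} give a bijection $L\leftrightarrow p(L)$ onto subgroups of $X^*(T)/Q$, and then recognize $\varphi$ and $\psi$ as annihilator maps, so that the double-annihilator theorem for the finite abelian group $Z$ finishes the proof. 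Your two key lemmas ($Z_L=p(L)^{\perp}$ and $p(L_{Z'})=(Z')^{\perp}$, the latter needing exactly the dominant-representative argument you supply) are correct, and the implicit strengthening $L_{Z'}=p^{-1}\big((Z')^{\perp}\big)\cap X^*_+(T)$ that the composition argument uses is immediate from your first fact, so there is no gap. What each approach buys: the paper's route yields the conceptually useful interpretation that perfect submonoids with full component support are precisely the dominant-weight monoids of the quotients $G/Z'$ (an interpretation reused later, e.g.\ in Lemma \ref{5.5}), while yours is more elementary and self-contained --- it trades the existence/uniqueness theorem for semisimple groups with prescribed character lattice for finite Pontryagin duality, and it makes both the bijectivity and the naturality of the correspondence completely transparent.
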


\begin{proof}
Let $L$ be an arbitrary perfect submonoid of $X^*_+(T)$ with full component support. By Proposition \ref{3.9}
, we have $L=\Tilde{L}\cap X^*_+(T)$ for some sublattice $\Tilde{L}$ of $X^*(T)$ containing $Q$. Then there is a unique (up to isomorphism) connected semisimple group $G'$ with simply connected cover $G$ and a maximal torus $T'$ such that $X^*(T')=\Tilde{L}$. Since $G'\simeq G/Z'$ for a unique subgroup $Z'$ of $Z$, we have $X^*(T')=\{\lambda \in X^*(T) \mid \lambda|_{Z'}=1\}$. Then we have $L=\Tilde{L}\cap X^*_+(T)=\{\lambda \in X^*_+(T) \mid \lambda|_{Z'}=1\}=\psi(Z')$ and $\psi$ is surjective. Meanwhile, by uniqueness of $G'$, $\psi$ is injective.

Now we show that $\varphi$ and $\psi$ are inverse to each other. Consider $(\psi \circ \varphi)(L)$ is also a perfect submonoid of $X^*_+(T)$ with full component support. For any $\lambda \in L$ and any $z \in \varphi(L)$, we have $\lambda(z)=1$. Then by definition, $\lambda$ is in $ (\psi \circ \varphi)(L)$ and $L\subset(\psi \circ \varphi)(L)$. Meanwhile, since $\psi$ is surjective, $L=\psi (Z')$ for some subgroup $Z'$ of $Z$. Then $\varphi(L)$ contains $Z'$. Then $(\psi \circ \varphi)(L)$ is a subset of $ \psi(Z')=L$. Therefore, we have $(\psi \circ \varphi)(L)=L$. For any $Z'<Z$, we have $(\psi \circ \varphi \circ \psi)(Z')=\psi(Z')$ by above. Since $\psi$ is injective, we have $(\varphi \circ \psi)(Z')=Z'$. Therefore, the pair $(\varphi,\psi)$ gives a bijection and it is clearly natural by definition.
\end{proof}

\subsection{Proof of the main result}\label{3.d}
Now we return to the setting in Subsection \ref{3.a} and prove Theorem \ref{A}. Let $L$ be a submonoid of $X^*_+(T)$. We first define the component support of $L$.
\begin{definition}\label{3.13}
Let $L$ be a submonoid of $X^*_+(T)$. The \textit{component support} of $L$ is the component support of $L$ as a submonoid of $X^*_+(T^{sc})$ (see Definition \ref{3.6}).
\end{definition}
\textbf{a)}
Let $L$ be a perfect submonoid of $X^*_+(T)$ with full component support. By our discussion above, $L$ is also a perfect submonoid of $X^*_+(T^{sc})$ with full component support. Therefore, by Proposition $\ref{3.9}$, we have $L=\Tilde{L}\cap X^*_+(T^{sc})$ where $\Tilde{L}$ is a sublattice of $X^*(T^{sc})$ containing the root lattice $Q$. Since $L$ is contained in $X^*_+(T)$, we have $L=\Tilde{L}\cap X^*_+(T)$. One can also write $L=\big(\Tilde{L}\cap X^*(T)\big)\cap X^*_+(T)$. Clearly, $\Tilde{L}\cap X^*(T)$ is a sublattice of $X^*(T)$ containing $Q$.

Conversely, let $L=\Tilde{L}\cap X^*_+(T)$ where $\Tilde{L}$ is a sublattice of $X^*(T)$ containing $Q$. Then $\Tilde{L}$ is also a sublattice of $X^*(T^{sc})$ containing $Q$. We also know $\Tilde{L}\cap X^*_+(T)=\Tilde{L}\cap X^*_+(T^{sc})$ since $\Tilde{L}\subset X^*(T)$. Then by Proposition \ref{3.9}, $L$ is a perfect submonoid of $X^*_+(T^{sc})$ with full component support and is also a perfect submonoid of $X^*_+(T)$ with full component support.

\textbf{b)}
We know that $G$ is isomorphic to $G^{sc}/Z'$. We also have $Z\simeq Z^{sc}/Z'$ which is natural. Then it suffices to show: There is a natural bijection between the perfect submonoids of $X^*_+(T)$ with full component support and the subgroups of $Z^{sc}/Z'$.

Recall that perfect submonoids of $X^*_+(T)$ with full component support are also perfect submonoids of $X^*_+(T^{sc})$ with full component support. By Proposition \ref{3.10}, there is a natural bijection between perfect submonoids of $X^*_+(T^{sc})$ with full component support and subgroups of $Z^{sc}$ given by
\begin{center}
    $\varphi: L\longmapsto Z^{sc}_{L}=\{z\in Z^{sc} \mid \lambda(z)=1, \forall \lambda \in L\}$,
\end{center}
and its inverse
\begin{center}
    $\psi: (Z^{sc})' \longmapsto L_{(Z^{sc})'}=\{\lambda \in X^*_+(T^{sc})\mid \lambda|_{(Z^{sc})'}=1\}$.
\end{center}

Note that 
\begin{center}
       $X^*_+(T)=\{\lambda \in X^*_+(T^{sc})\mid \lambda|_{Z'}=1 \}$.
\end{center}
If $L$ is a perfect submonoid of $X^*_+(T)$ with full component support, then $\varphi(L)=Z^{sc}_L$ contains $Z'$. 

Conversely, for any subgroup $(Z^{sc})'$ of $Z^{sc}$ containing $Z'$, $\psi((Z^{sc})')=L_{(Z^{sc})'}$ is actually a perfect submonoid of $X^*_+(T)$. Then the restrictions of $\varphi$ and $\psi$ actually give a natural bijection between perfect submonoids of $X^*_+(T)$ with full component support and subgroups of $Z^{sc}$ containing $Z'$. Since there is a natural bijection between subgroups of $Z^{sc}$ containing $Z'$ and subgroups of $Z^{sc}/Z'$, one can combine two natural bijections together and get the required bijection.

\subsection{Characterization for arbitrary perfect submonoids}\label{3.e}
In this subsection we drop the assumption that $L$ has full component support and deal with arbitrary perfect submonoids. Indeed, we only need to consider the nonzero perfect submonoids.

Let $\Xi_0$ be an arbitrary nonempty subset of $\Xi$ and $L$ be a perfect submonoid of $X^*_+(T)$ with component support $\Xi_0$. Then $L$ is also a perfect submonoid of $X^*_+(T^{sc})$ with component support $\Xi_0$. Let $X^*(T)_{\Xi_0}=X^*(T)\cap X^*(T^{sc})_{\Xi_0}$ and $Q_{\Xi_0}=Q\cap X^*(T^{sc})_{\Xi_0}$, where
\begin{center}
    $X^*(T^{sc})_{\Xi_0}:=\{\big(\pi_1(\lambda),...,\pi_n(\lambda)\big)\mid \lambda\in X^*(T^{sc}), \pi_k(\lambda)=0 \text{ for any } k\notin \Xi_0\}\subset X^*(T^{sc})$.
\end{center}
It is clear that $X^*(T^{sc})_{\Xi_0}$ and $Q_{\Xi_0}$ are isomorphic to the weight lattice and the root lattice of $G^{sc}_{\Xi_0}=\prod\limits_{k\in \Xi_0} G^{sc}_k$, respectively. Then $L$ is contained in $X^*_+(T)_{\Xi_0}\subset X^*_+(T^{sc})_{\Xi_0}$ and one can view $L$ as a perfect submonoid of dominant weights of $G^{sc}_{\Xi_0}$ with full component support. 

Then we can slightly modify the maps $\varphi$ and $\psi$. Recall that $G\simeq G^{sc}/Z'$. Let $Z^{sc}_{\Xi_0}=\prod\limits_{k\in \Xi_0} Z^{sc}_k$ and $Z_{\Xi_0}'=Z'\cap Z^{sc}_{\Xi_0}$. Define the map $\varphi_{\Xi_0}$ from perfect submonoids of $X^*_+(T^{sc})$ with component support $\Xi_0$ to subgroups of $Z^{sc}_{\Xi_0}$ as $\varphi_{\Xi_0}(L)=(Z^{sc}_{\Xi_0})_L$, where
\begin{center}
     $(Z^{sc}_{\Xi_0})_L=\{z \in Z^{sc}_{\Xi_0} \mid \lambda(z)=1 ,\forall \lambda \in L\}$.
\end{center}
For the inverse direction, define the map $\psi_{\Xi_0}$ as $\psi_{\Xi_0}\big((Z^{sc}_{\Xi_0})'\big)=L_{(Z^{sc}_{\Xi_0})'}$, where
\begin{center}
    $L_{(Z^{sc}_{\Xi_0})'}=\{\lambda \in X^*_+(T^{sc})_{\Xi_0} \mid \lambda|_{(Z^{sc}_{\Xi_0})'}=1\}$.    
\end{center}
One can also write
\begin{center}
    $L_{(Z^{sc}_{\Xi_0})'}=\{\lambda \in X^*_+(T^{sc}) \mid \lambda|_{(Z^{sc}_{\Xi_0})'}=1, \lambda|_{T_k}=1,\forall k\notin \Xi_0 \}$.
\end{center}
Then we deduce the characterization for perfect submonoids of $X^*_+(T)$ with component support $\Xi_0\subset \Xi$ and its reformulation as a corollary of Theorem \ref{A}. 
\begin{corollary}\label{3.12}
Let $G$ be a connected semisimple algebraic group. Then

a) The perfect submonoids of $X^*_+(T)$ with component support $\Xi_0 \subset \Xi$ are exactly $\Tilde{L}\cap X^*_+(T)$, where $\Tilde{L}$ is any sublattice of $X^*(T)_{\Xi_0}$ containing $Q_{\Xi_0}$;

b) There is a natural bijection between the perfect submonoids of $X^*_+(T)$ with component support $\Xi_0 \subset \Xi$ and the subgroups of $Z^{sc}_{\Xi_0}/Z_{\Xi_0}'$.
\end{corollary}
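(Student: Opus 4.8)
The plan is to reduce the whole statement to the simply connected semisimple group $G^{sc}_{\Xi_0}=\prod_{k\in\Xi_0}G^{sc}_k$ and then to invoke Theorem~\ref{A} (equivalently Propositions~\ref{3.9} and~\ref{3.10}) for that group. First I would note that if $L$ has component support $\Xi_0$, then $\pi_k(\lambda)=0$ for every $\lambda\in L$ and every $k\notin\Xi_0$, so $L\subset X^*(T^{sc})_{\Xi_0}$, which together with $Q_{\Xi_0}$ is exactly the weight lattice (resp. root lattice) of $G^{sc}_{\Xi_0}$. Because perfectness is tested only through tensor-product decompositions, and these take place inside $\Rep(G^{sc}_{\Xi_0})$, the monoid $L$ is a perfect submonoid of $X^*_+(T^{sc})_{\Xi_0}$ which, by construction, has full component support for $G^{sc}_{\Xi_0}$.

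For part a), I would apply Proposition~\ref{3.9} to $G^{sc}_{\Xi_0}$ to write $L=\Tilde{L}_0\cap X^*_+(T^{sc})_{\Xi_0}$ for some sublattice $\Tilde{L}_0$ of $X^*(T^{sc})_{\Xi_0}$ containing $Q_{\Xi_0}$. Since $L\subset X^*_+(T)$, intersecting with $X^*(T)$ changes nothing, and replacing $\Tilde{L}_0$ by $\Tilde{L}:=\Tilde{L}_0\cap X^*(T)_{\Xi_0}$ gives $L=\Tilde{L}\cap X^*_+(T)$ with $Q_{\Xi_0}\subset\Tilde{L}\subset X^*(T)_{\Xi_0}$. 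For the converse, given such a $\Tilde{L}$, Proposition~\ref{3.1} applied to $G^{sc}_{\Xi_0}$ shows that $\Tilde{L}\cap X^*_+(T^{sc})_{\Xi_0}=\Tilde{L}\cap X^*_+(T)$ is perfect, and the inclusion $Q_{\Xi_0}\subset\Tilde{L}$ forces the component support to be exactly $\Xi_0$, since each factor $k\in\Xi_0$ already contributes through $Q_k\cap X^*_+(T_k)\neq\{0\}$.

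For part b), I would specialize Theorem~\ref{A}(b) to $G^{sc}_{\Xi_0}$. The heart of the argument, and the step I expect to be the main obstacle, is to identify precisely which central subgroup cuts out $X^*(T)_{\Xi_0}$ inside $X^*(T^{sc})_{\Xi_0}$. Writing $\mathrm{pr}_{\Xi_0}\colon Z^{sc}=\prod_k Z^{sc}_k\to Z^{sc}_{\Xi_0}$ for the projection, a weight $\lambda\in X^*(T^{sc})_{\Xi_0}$ is already trivial on the factors outside $\Xi_0$, so $\lambda|_{Z'}=1$ holds if and only if $\lambda$ is trivial on the image $\mathrm{pr}_{\Xi_0}(Z')$. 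Hence $X^*(T)_{\Xi_0}$ is the weight lattice of $G^{sc}_{\Xi_0}/\mathrm{pr}_{\Xi_0}(Z')$, and Theorem~\ref{A}(b) then yields a natural bijection between the perfect submonoids of $X^*_+(T)$ with component support $\Xi_0$ and the subgroups of $Z^{sc}_{\Xi_0}/\mathrm{pr}_{\Xi_0}(Z')$. The subtlety I would be most careful about is that the relevant subgroup is the \emph{image} $\mathrm{pr}_{\Xi_0}(Z')$, which can be strictly larger than the intersection $Z'_{\Xi_0}=Z'\cap Z^{sc}_{\Xi_0}$, for instance when $Z'$ is diagonally embedded; so one must track the image throughout rather than the intersection, and verify (via $\varphi_{\Xi_0},\psi_{\Xi_0}$) that these maps restrict to mutually inverse bijections onto subgroups containing $\mathrm{pr}_{\Xi_0}(Z')$.
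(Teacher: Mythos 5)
Your part a) is essentially the paper's own argument, just packaged slightly differently: the paper identifies $X^*(T)_{\Xi_0}$ and $Q_{\Xi_0}$ as the weight and root lattices of a semisimple group $G'_{\Xi_0}$ with simply connected cover $G^{sc}_{\Xi_0}$, identifies perfect submonoids of $X^*_+(T)$ with component support $\Xi_0$ with perfect submonoids of $X^*_+(T)_{\Xi_0}$ having full component support, and quotes Theorem~\ref{A}; you run the same reduction through Propositions~\ref{3.9} and~\ref{3.1} for $G^{sc}_{\Xi_0}$ and then intersect with $X^*(T)_{\Xi_0}$. No issues there.

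In part b) you genuinely diverge from the paper, and your version is the correct one: the corollary as printed contains an error that your proposal fixes. The paper sets $Z'_{\Xi_0}=Z'\cap Z^{sc}_{\Xi_0}$ and claims, imitating the proof of Theorem~\ref{A}, that the restrictions of $\varphi_{\Xi_0},\psi_{\Xi_0}$ give a bijection onto subgroups of $Z^{sc}_{\Xi_0}$ containing this \emph{intersection}. As you observe, for $\lambda\in X^*(T^{sc})_{\Xi_0}$ one has $\lambda(z)=\lambda\big(\mathrm{pr}_{\Xi_0}(z)\big)$ for every $z\in Z^{sc}$, so $\lambda|_{Z'}=1$ if and only if $\lambda|_{\mathrm{pr}_{\Xi_0}(Z')}=1$; hence $\psi_{\Xi_0}\big((Z^{sc}_{\Xi_0})'\big)\subset X^*_+(T)$ precisely when $(Z^{sc}_{\Xi_0})'\supset\mathrm{pr}_{\Xi_0}(Z')$. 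The inclusion $\varphi_{\Xi_0}(L)\supset Z'_{\Xi_0}$ implicit in the paper is true but not sharp, and $\psi_{\Xi_0}$ applied to a subgroup containing $Z'_{\Xi_0}$ but not $\mathrm{pr}_{\Xi_0}(Z')$ does not land inside $X^*_+(T)$, so the paper's claimed bijection breaks down. Concretely, take $G^{sc}=\mathrm{SL}_2\times\mathrm{SL}_2$, $Z'$ the diagonal copy of $\BZ/2$ (so $G\simeq\mathrm{SO}_4$), and $\Xi_0=\{1\}$. Then $Z'_{\Xi_0}$ is trivial and $Z^{sc}_{\Xi_0}/Z'_{\Xi_0}\simeq\BZ/2$ has two subgroups, whereas $X^*(T)_{\Xi_0}=\BZ\alpha_1=Q_{\Xi_0}$, so by part a) (or directly from Clebsch--Gordan for $\mathrm{SL}_2$) the unique perfect submonoid of $X^*_+(T)$ with component support $\{1\}$ is $2\BZ_{\geq 0}\omega_1$: one submonoid, matching the single subgroup of $Z^{sc}_{\Xi_0}/\mathrm{pr}_{\Xi_0}(Z')=1$, and no bijection with the subgroups of $Z^{sc}_{\Xi_0}/Z'_{\Xi_0}$ can exist. (With the intersection, parts a) and b) of the corollary would in fact contradict each other in this example.) So the statement and the final paragraph of the paper's proof should be read with $\mathrm{pr}_{\Xi_0}(Z')$ in place of $Z'\cap Z^{sc}_{\Xi_0}$; the two agree exactly when $Z'$ splits as a product along the quasi-simple factors. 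Your insistence on tracking the image throughout is the right argument.
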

\begin{proof}
a) One notices that $X^*(T)_{\Xi_0}$ is a sublattice of $X^*(T^{sc})_{\Xi_0}$ containing $Q_{\Xi_0}$. Then $X^*(T)_{\Xi_0}$ and $Q_{\Xi_0}$ are the weight lattice and the root lattice of a connected semisimple group $G_{\Xi_0}'$ with simply connected cover $G^{sc}_{\Xi_0}$, respectively. Then one can check perfect submonoids of $X^*_+(T)_{\Xi_0}$ with full component support are also perfect submonoids of $X^*_+(T^{sc})$ contained in $X^*_+(T)$ with component support $\Xi_0$. Then by our discussions above, perfect submonoids of $X^*_+(T)$ with component support $\Xi_0$ are exactly perfect submonoids of $X^*_+(T)_{\Xi_0}$ with full component support.

We know $X^*_+(T)_{\Xi_0}$ is the set of dominant weights of $G_{\Xi_0}'$. Then by applying Theorem \ref{A} to perfect submonoids of $X^*_+(T)_{\Xi_0}$ with full component support, part a) is proved. 

b) Identify $X^*_+(T^{sc})_{\Xi_0}$ with the set of dominant weights of $G^{sc}_{\Xi_0}$. Then there is a natural bijection between the perfect submonoids of $X^*_+(T^{sc})$ with component support $\Xi_0$ and the perfect submonoids of $X^*_+(T^{sc})_{\Xi_0}$ with full component support. Then by applying Proposition \ref{3.10} to $G^{sc}_{\Xi_0}$, we have the maps $\varphi_{\Xi_0}$ and $\psi_{\Xi_0}$ give a natural bijection between the perfect submonoids of $X^*_+(T^{sc})$ with component support $\Xi_0$ and the subgroups of $Z^{sc}_{\Xi_0}$. 

Moreover, same as the proof of Theorem \ref{A}, the restrictions of $\varphi_{\Xi_0}$ and $\psi_{\Xi_0}$ actually  give a natural bijection between perfect submonoids of $X^*_+(T)$ with component support $\Xi_0$ and subgroups of $Z_{\Xi_0}^{sc}$ containing $Z_{\Xi_0}'$. Since there is a natural bijection between subgroups of $Z_{\Xi_0}^{sc}$ containing $Z_{\Xi_0}'$ and subgroups of $Z_{\Xi_0}^{sc}/Z_{\Xi_0}'$, again we can combine two bijections together and get the required natural bijection.
\end{proof}

\section{Proof of Proposition \ref{3.5}}\label{4}
In this section, we keep the notations in Subsection \ref{3.b} and prove Proposition \ref{3.5}. We first give the idea of the proof. Then we reduce it to the case when $G$ is quasi-simple and finally give the computations in different types.
\subsection{Idea}\label{4.a}
Let $\lambda$ be a dominant weight in $L$ with component support $\Xi_0\subset \Xi$. We may assume $\Xi_0=\{1,2,...,n_0\}$. Take the dominant weight $ \omega_{\lambda} \in L$ in Lemma \ref{3.3} such that for any $1\leq k\leq n_0$, $\omega_{\lambda}$ is $k$-regular. By Lemma \ref{3.4}, there is a positive integer $m$ such that for any $\mu \in \Pi(\lambda)$, $\mu+m\omega_{\lambda}$ is in $L$. Without loss of generality, we assume $m=1$. 

Now let $\mu$ be an arbitrary dominant weight in $\Pi(\lambda)$. Then the component support of $\mu$ is contained in $\Xi_0$. Our idea is finding a dominant weight $\eta$ in $L$ based on $\omega_{\lambda}$, such that $\mu+\eta$ is also in $L$ and $w_0(\eta)=-\eta$, where $w_0$ is the longest element in $W$. Then we have $\mu=\mu+\eta+w_0(\eta)\in X(\mu+\eta,\eta)\subset L$ by Theorem \ref{2.3}. Since $\mu$ is arbitrary, all dominant weights in $\Pi(\lambda)$ are contained in $L$, which proves Proposition \ref{3.5}.

\subsection{Reduction}\label{4.b}
As in Subsection \ref{3.b}, one can write $\omega_{\lambda}=\big(\pi_1(\omega_{\lambda}),...,\pi_n(\omega_{\lambda})\big)$, where $\pi_k(\omega_{\lambda})\in X^*_+(T_k)$. For any $k\notin \Xi_0$, we know $\pi_k(\omega_{\lambda})=0$. For any $w\in W$, we write $w=(w^{(1)},w^{(2)},...,w^{(n)})$, where $w^{(k)}$ is in the Weyl group $W_k$ of $G_k$. In particular, $w_0=(w_0^{(1)},w_0^{(2)},...,w_0^{(n)})$, where $w_0^{(k)}$ is the longest element in $W_k$. We construct $\eta$ by some lemmas.

\begin{lemma}\label{a.1}
Suppose that $G$ is simply connected quasi-simple and $\omega \in L$ is regular. There is a sequence $\{\nu_0=\omega,\nu_1,...,\nu_r\}$ of nonzero dominant weights in $L$ such that $w_0(\nu_r)=-\nu_r$ and for any $0\leq l \leq r-1$, $\nu_{l+1}=\beta_l+\sigma_l(\gamma_l)$ for some $\beta_l,\gamma_l \in \{\nu_0,...,\nu_l\}$ and $\sigma_l\in W$.
\end{lemma}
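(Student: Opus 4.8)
The plan is to induct on how far $\omega$ is from being self-dual, the relevant measure being the diagram automorphism $\tau:=-w_0$. First I would record the dichotomy: $\tau$ is an automorphism of the based root datum of order $1$ or $2$. When $\tau=\mathrm{id}$, i.e. $w_0=-1$ on $X^*(T)$ (types $A_1$, $B_n$, $C_n$, $D_n$ with $n$ even, $E_7$, $E_8$, $F_4$, $G_2$), every dominant weight is already self-dual, so $w_0(\omega)=-\omega$ and the length-zero sequence $\{\nu_0=\omega\}$ works with $r=0$. Thus the content lies in the three families where $\tau$ is the nontrivial involution of the Dynkin diagram: $A_n$ ($n\ge2$), $D_n$ ($n$ odd), and $E_6$.

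The main tool is that each prescribed step stays inside $L$: if $\beta,\gamma$ already belong to the list and $\beta+\sigma(\gamma)$ is dominant for some $\sigma\in W$, then by Theorem \ref{2.3} applied with $w=\sigma$ (so that $\overline{\beta+\sigma\gamma}=\beta+\sigma\gamma$) we have $\beta+\sigma(\gamma)\in X(\beta,\gamma)$, which lies in $L$ since $L$ is perfect. The move I would single out is the \emph{folding} $\lambda\mapsto\lambda+s_i(\lambda)=2\lambda-\langle\lambda,\alpha_i^\vee\rangle\alpha_i$: for any dominant $\lambda$ this is again dominant, since $\langle\lambda+s_i\lambda,\alpha_i^\vee\rangle=0$ and $\langle\lambda+s_i\lambda,\alpha_j^\vee\rangle=2\langle\lambda,\alpha_j^\vee\rangle-\langle\lambda,\alpha_i^\vee\rangle\langle\alpha_i,\alpha_j^\vee\rangle\ge0$ for $j\ne i$; it kills the $i$-th coordinate while leaving the others nonnegative. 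With this in hand the goal is reformulated as: starting from the regular weight $\omega$, reach any nonzero $\tau$-invariant dominant weight by admissible steps, since such a $\nu_r$ satisfies $w_0(\nu_r)=-\tau(\nu_r)=-\nu_r$.

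The construction I would then carry out is explicit and type-by-type, modelled on $A_2$: in fundamental-weight coordinates, fold $\nu_0=(a,b)$ at node $1$ to reach the boundary weight $\nu_1=\nu_0+s_1(\nu_0)=(0,a+2b)$, fold $\nu_1$ at node $2$ to reach its dual $\nu_2=\nu_1+s_2(\nu_1)=(a+2b,0)$, and finally pair them with $\sigma=\mathrm{id}$ to obtain the self-dual weight $\nu_3=\nu_1+\nu_2=(a+2b)(\omega_1+\omega_2)$. The same idea—fold the weight onto a face, relocate it to the $\tau$-image face, and symmetrize by a pairing step $\nu+\mathrm{id}(\tau\nu)$—produces a $\tau$-symmetric weight in the remaining cases, where for $D_n$ ($n$ odd) and $E_6$ the automorphism $\tau$ merely interchanges two nodes and is handled by a short sequence. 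I expect the genuine obstacle to be the type $A_n$ bookkeeping for a \emph{general} regular $\omega$: under folding a single-node weight does not simply translate to its $\tau$-image but spreads to neighbouring nodes, so one must order the folds and pairings carefully to keep every intermediate weight dominant and nonzero while ultimately matching the coordinate at $i$ with the coordinate at $\tau(i)$. Verifying this, together with the analogous but easier checks for $D_n$ odd and $E_6$, is the computational heart of the argument.
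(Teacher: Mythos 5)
Your skeleton coincides exactly with the paper's: the split into the types where $w_0=-1$ (handled by $r=0$) versus $A_n$, $D_{2n+1}$, $E_6$; the use of Theorem \ref{2.3} to keep each dominant $\beta+\sigma(\gamma)$ inside $L$; and the folding move $\lambda\mapsto\lambda+s_i(\lambda)$ with its dominance check are precisely the paper's ingredients, and your $A_2$ computation is correct (so is, implicitly, the $D_{2n+1}$ case, where the two swapped spin nodes are non-adjacent and two successive folds at them already land in $\sum_{i\leq 2n-1}\mathbb{Z}_{\geq0}\omega_i$, exactly as the paper does). The genuine gap is that for $A_n$ with $n\geq 3$ and for $E_6$ you do not construct anything: you assert that a careful ordering of folds and pairings should work and explicitly defer the verification. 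That deferred verification \emph{is} the lemma -- the paper's proof consists of nothing but these explicit sequences -- so what you have is a correct strategy statement, not a proof.

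Moreover, the concrete recipe you sketch (iterated folds to push the weight onto a face, then one pairing with $\sigma=\mathrm{id}$) cannot succeed as stated. A fold at node $i$ applied to a weight with $\langle\lambda,\alpha_i^\vee\rangle>0$ zeroes the $i$-th coordinate but makes every neighbouring coordinate strictly positive, so along any chain of folds starting from the regular weight $\omega$ the set of vanishing fundamental-weight coordinates is always an \emph{independent} subset of the Dynkin diagram. In $E_6$ the involution $-w_0$ moves the nodes $\{1,3,5,6\}$ (it swaps the two pairs $1\leftrightarrow 6$ and $3\leftrightarrow 5$, not ``two nodes'' as you wrote), and this set contains the edges $\{1,3\}$ and $\{5,6\}$; hence no chain of folds reaches a weight supported on the fixed nodes $\{2,4\}$ -- indeed alternating folds at $5$ and $6$ just bounce $(c_5,c_6)$ between $(x,0)$ and $(0,x)$ forever. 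Likewise in $A_n$, reaching $a\omega_n$ requires killing the non-independent set $\{1,\dots,n-1\}$. The paper escapes this by using steps of the allowed form $\beta_l+\sigma_l(\gamma_l)$ in which $\gamma_l$ is an \emph{earlier} member of the sequence and $\sigma_l$ has length greater than one, i.e.\ partial orbit sums: in $E_6$, $\nu_3=\nu_2+s_6s_5(\nu_1)$ kills coordinates $5$ and $6$ simultaneously via $\omega_5+s_5(\omega_5)+s_6s_5(\omega_5)=2\omega_4$, followed by the mirror steps at nodes $1,3$; in $A_n$, the staircase sums $\zeta_{i+1}=\zeta_i+s_i(\zeta_i)+s_{i-1}s_i(\zeta_i)+\cdots+s_1s_2\cdots s_i(\zeta_i)$, whose partial sums stay dominant by the identity $s_{i-l+1}\cdots s_i(\omega_i)=\omega_{i-l}-\omega_{i-l+1}+\omega_{i+1}$ (Lemma \ref{a.4}), drive the support to $\omega_n$, symmetrically to $\omega_1$, and then $b\zeta_n+a\theta_n$ is fixed by $-w_0$. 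Supplying such sequences (or an equivalent mechanism beyond folding) is exactly what is missing from your argument.
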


\begin{proof}
We give the precise computations for this lemma in different types in Subsection \ref{4.c}.
\end{proof}

\begin{lemma}\label{a.2}
Let $1 \leq k_0 \leq n_0$ and $\omega=\big(\pi_1(\omega),...,\pi_n(\omega)\big)$ be any dominant weight in $L$ with component support $\Xi_0$ such that, $\mu+\omega\in L$ and $\omega$ is $k_0$-regular. Then there is a dominant weight $\theta_{k_0}=\big(\pi_1(\theta_{k_0}),...,\pi_n(\theta_{k_0})\big)$ in $L$ with component support $\Xi_0$ such that $\mu+\theta_{k_0} \in L$, $w_0^{(k_0)}\big(\pi_{k_0}(\theta_{k_0})\big)=-\pi_{k_0}(\theta_{k_0})$ and $\pi_k(\theta_{k_0})$ is a positive integral multiple of $\pi_k(\omega)$ for $k \in \Xi_0 \setminus \{k_0\}$.
\end{lemma}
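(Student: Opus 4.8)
The plan is to run the quasi-simple construction of Lemma \ref{a.1} inside the single factor $G_{k_0}$ and lift it to the product $G$, taking care to use only Weyl elements supported in $W_{k_0}$ so that the other components are simply summed and stay proportional to $\pi_k(\omega)$.

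First I would note that, because $\omega$ is $k_0$-regular, its component $\pi_{k_0}(\omega)$ is a regular dominant weight of $G_{k_0}$ lying in the perfect submonoid $\pi_{k_0}(L)$ of $X^*_+(T_{k_0})$. Applying Lemma \ref{a.1} to $G_{k_0}$ gives a sequence $\nu_0 = \pi_{k_0}(\omega), \nu_1, \dots, \nu_r$ of nonzero dominant weights with $w_0^{(k_0)}(\nu_r) = -\nu_r$, produced by the recursion $\nu_{l+1} = \beta_l + \sigma_l(\gamma_l)$ with $\beta_l, \gamma_l$ earlier terms and $\sigma_l \in W_{k_0}$. I would then lift this recursion verbatim: identifying $\sigma_l$ with the element $\wt\sigma_l \in W$ that acts as $\sigma_l$ on the $k_0$-th factor and trivially on the others, I set $\wt\nu_0 = \omega$ and $\wt\nu_{l+1} = \wt\beta_l + \wt\sigma_l(\wt\gamma_l)$, where $\wt\beta_l, \wt\gamma_l$ denote the lifts of the chosen earlier terms. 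By construction $\pi_{k_0}(\wt\nu_l) = \nu_l$, and since $\wt\sigma_l$ is trivial off the $k_0$-th factor, $\pi_k(\wt\nu_{l+1}) = \pi_k(\wt\beta_l) + \pi_k(\wt\gamma_l)$ for $k \neq k_0$; a short induction then shows $\pi_k(\wt\nu_l)$ is a positive integral multiple of $\pi_k(\omega)$ for $k \in \Xi_0 \setminus \{k_0\}$ and vanishes for $k \notin \Xi_0$, so each $\wt\nu_l$ is dominant with component support $\Xi_0$.

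The heart of the argument is a single (strong) induction on $l$ proving simultaneously that $\wt\nu_l \in L$ and $\mu + \wt\nu_l \in L$. The base case follows from $\wt\nu_0 = \omega \in L$ and the hypothesis $\mu + \omega \in L$. For the step, $\wt\nu_{l+1}$ is dominant by the component analysis above, so Theorem \ref{2.3} places it in $X(\wt\beta_l, \wt\gamma_l)$ and perfectness of $L$ gives $\wt\nu_{l+1} \in L$; rewriting $\mu + \wt\nu_{l+1} = (\mu + \wt\beta_l) + \wt\sigma_l(\wt\gamma_l)$, where $\mu + \wt\beta_l$ and $\wt\gamma_l$ are dominant members of $L$ by the inductive hypothesis and the sum is again dominant, Theorem \ref{2.3} and perfectness yield $\mu + \wt\nu_{l+1} \in L$. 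Setting $\theta_{k_0} = \wt\nu_r$ then delivers every required property: $\theta_{k_0}, \mu + \theta_{k_0} \in L$, the component support is $\Xi_0$, $w_0^{(k_0)}(\pi_{k_0}(\theta_{k_0})) = w_0^{(k_0)}(\nu_r) = -\nu_r = -\pi_{k_0}(\theta_{k_0})$, and $\pi_k(\theta_{k_0})$ is a positive multiple of $\pi_k(\omega)$ for $k \in \Xi_0 \setminus \{k_0\}$.

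The main obstacle is not any hard inequality but the bookkeeping that makes the lift legitimate: I must verify that restricting to Weyl elements in $W_{k_0}$ genuinely keeps the non-$k_0$ components as honest positive multiples of $\pi_k(\omega)$ (preserving both component support and dominance at every stage), and that the two chains $\wt\nu_l$ and $\mu + \wt\nu_l$ can be carried along together. Both points reduce to applying Theorem \ref{2.3} only to sums that are already dominant, which the factorwise structure guarantees; the genuinely computational work is quarantined in Lemma \ref{a.1}.
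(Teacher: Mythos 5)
Your proposal is correct and follows essentially the same route as the paper's own proof: both lift the quasi-simple sequence from Lemma \ref{a.1} to $X^*(T)$ via Weyl elements acting only in the $W_{k_0}$-factor, run a simultaneous induction showing $\wt\nu_l \in L$ and $\mu + \wt\nu_l \in L$ using Theorem \ref{2.3} together with perfectness, and take $\theta_{k_0} = \wt\nu_r$. The component bookkeeping (positive multiples of $\pi_k(\omega)$ off the $k_0$-th factor, vanishing outside $\Xi_0$) also matches the paper's argument.
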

\begin{proof}
Recall that $\pi_{k_0}(L)$ is a perfect submonoid of $X^*_+(T_{k_0})$ with full component support since $L$ is perfect and $G_{k_0}$ is quasi-simple. Then there is a regular dominant weight $\pi_{k_0}(\omega)$ of $G_{k_0}$ in $\pi_{k_0}(L)$. Take the sequence $\{\nu_0=\pi_{k_0}(\omega),...,\nu_r\}$ in $\pi_{k_0}(L)$ in Lemma \ref{a.1}. For any $0\leq l \leq r-1$, one can write $\nu_{l+1}=\beta_l+\sigma_l^{(k_0)}(\gamma_l)$ for some $\beta_l,\gamma_l \in \{\nu_0,...,\nu_l\}$ and $\sigma_l^{(k_0)}\in W_{k_0}$.

Based on above sequence, there is also a sequence $\{\Tilde{\nu}_0,\Tilde{\nu}_1,...,\Tilde{\nu}_r\}$ of nonzero weights in $X^*(T)$ as following: 

1) $\Tilde{\nu}_0=\omega$;

2) Suppose that we have $\{\Tilde{\nu}_0,\Tilde{\nu}_1,...,\Tilde{\nu}_l\}$ and $\nu_{l+1}=\nu_{a_{l+1}}+\sigma_l^{(k_0)}(\nu_{b_{l+1}})$ for some $a_{l+1},b_{l+1} \in \{0,1,...,l\}$. Then set 
\begin{center}
    $\Tilde{\nu}_{l+1}:=\Tilde{\nu}_{a_{l+1}}+\Tilde{\sigma_l}(\Tilde{\nu}_{b_{l+1}})$,
\end{center}
where $\Tilde{\sigma}_l=(\Tilde{\sigma}_l^{(1)},...,\Tilde{\sigma}_l^{(n)})$ is given by $\Tilde{\sigma}_l^{(k)}=\left\{ \begin{array}{cc}
   \sigma_l^{(k_0)},&k=k_0 \\
   Id^{(k)},&k\neq k_0
\end{array}\right.$.

Then we claim that for any $0\leq l\leq r$, both $\Tilde{\nu}_l$ and $\mu+\Tilde{\nu}_l$ are in $L$ with component support $\Xi_0$. Moreover, $\pi_{k_0}(\Tilde{\nu}_l)=\nu_l$ and $\pi_k(\Tilde{\nu}_l)$ is a positive integral multiple of $\pi_k(\omega)$ for $k \in \Xi_0\setminus \{k_0\}$.

Prove the claim by induction on $l$. For $l=0$, the claim is clearly true since $\Tilde{\nu}_0=\omega$. Suppose that the claim is true for $\{0,1,...,l\}$. Since $\Tilde{\nu}_{l+1}=\Tilde{\nu}_{a_{l+1}}+\Tilde{\sigma_l}(\Tilde{\nu}_{b_{l+1}})$, we have
\begin{center}
    $\pi_k(\Tilde{\nu}_{l+1})=\left\{ \begin{array}{ll}
   \pi_{k_0}(\Tilde{\nu}_{a_{l+1}})+\Tilde{\sigma_l}^{(k_0)}\big(\pi_{k_0}(\Tilde{\nu}_{b_{l+1}})\big)=\nu_{a_{l+1}}+\sigma_l^{(k_0)}(\nu_{b_{l+1}})=\nu_{l+1},& k=k_0, \\
   \pi_k(\Tilde{\nu}_{a_{l+1}})+\Tilde{\sigma_l}^{(k)}\big(\pi_k(\Tilde{\nu}_{b_{l+1}})\big)=\pi_k(\Tilde{\nu}_{a_{l+1}})+\pi_k(\Tilde{\nu}_{b_{l+1}}),& k\in \Xi_0\setminus k_0, \\
   \pi_k(\Tilde{\nu}_{a_{l+1}})+\Tilde{\sigma_l}^{(k)}\big(\pi_k(\Tilde{\nu}_{b_{l+1}})\big)=0,& k\notin \Xi_0.
\end{array}\right.$
\end{center}
We have $\pi_k(\Tilde{\nu}_{l+1})=\pi_k(\Tilde{\nu}_{a_{l+1}})+\pi_k(\Tilde{\nu}_{b_{l+1}})$ is a positive integral multiple of $\pi_k(\omega)$ for $k\in \Xi_0\setminus \{k_0\}$ by induction hypothesis. Since $\pi_{k_0}(\Tilde{\nu}_{l+1})=\nu_{l+1}$ is dominant, we have $\Tilde{\nu}_{l+1}$ and $\mu+\Tilde{\nu}_{l+1}$ are both dominant with component support $\Xi_0$ by above computation. Then we have $\Tilde{\nu}_{l+1} \in X(\Tilde{\nu}_{a_{l+1}},\Tilde{\nu}_{b_{l+1}})$ and $\mu+\Tilde{\nu}_{l+1} \in X(\mu+\Tilde{\nu}_{a_{l+1}},\Tilde{\nu}_{b_{l+1}})$ by Theorem \ref{2.3}. Therefore, $\Tilde{\nu}_{l+1}$ and $\mu+\Tilde{\nu}_{l+1}$ are both in $L$. Then the claim is true for $(l+1)$-case. By induction, the claim is true.

Consider the dominant weight $\Tilde{\nu}_r$ in the sequence. From above claim, We know $\Tilde{\nu}_r$ and $\mu+\Tilde{\nu}_r$ are both in $L$ with component support $\Xi_0$ and for $k\in \Xi_0\setminus \{k_0\}$, $\pi_k(\Tilde{\nu}_r)$ is a positive integral multiple of $\pi_k(\omega)$. Since $\pi_{k_0}(\Tilde{\nu}_r)=\nu_r$, we also have $w_0^{(k_0)}\big(\pi_{k_0}(\Tilde{\nu}_r)\big)=-\pi_{k_0}(\Tilde{\nu}_r)$ by Lemma \ref{a.1}. Then $\Tilde{\nu}_r$ is a desirable dominant weight $\theta_{k_0}$.
\end{proof}

\begin{lemma}\label{a.3}
There is a sequence $\{\eta_0=\omega_{\lambda},\eta_1, \eta_2,...,\eta_{n_0}\}$ of dominant weights with component support $\Xi_0$ satisfying, for any $1 \leq k_0 \leq n_0$:
\begin{itemize}
    \item a) $\eta_{k_0} \in L$ and $\mu+\eta_{k_0} \in L$;
    \item b) $w_0^{(k_0)}\big(\pi_{k_0}(\eta_{k_0})\big)=-\pi_{k_0}(\eta_{k_0})$ and $\pi_{k}(\eta_{k_0})$ is a positive integral multiple of $\pi_{k}(\eta_{k_0-1})$ for $k \in \Xi_0 \setminus \{k_0\}$;
    \item c) For any $k_0+1 \leq k \leq n_0$, $\eta_{k_0}$ is $k$-regular.
\end{itemize}
\end{lemma}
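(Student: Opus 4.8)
The plan is to construct the sequence $\{\eta_0=\omega_\lambda,\eta_1,\dots,\eta_{n_0}\}$ by induction on $k_0$, at each step invoking Lemma \ref{a.2} to ``fix'' one more quasi-simple factor while preserving the regularity we still need for the later factors. The base case is $\eta_0=\omega_\lambda$, which by the choice in the Idea subsection lies in $L$, has component support $\Xi_0$, satisfies $\mu+\omega_\lambda\in L$ (after normalizing $m=1$), and is $k$-regular for every $1\leq k\leq n_0$. The inductive step is where the real work happens: assuming $\eta_{k_0-1}$ has been built with all three properties, I would apply Lemma \ref{a.2} with the roles $k_0\mapsto k_0$ and $\omega\mapsto\eta_{k_0-1}$.

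To apply Lemma \ref{a.2} I must check its hypotheses for $\omega=\eta_{k_0-1}$: it should be a dominant weight in $L$ with component support $\Xi_0$, satisfy $\mu+\eta_{k_0-1}\in L$, and be $k_0$-regular. The first two follow from part a) of the induction hypothesis, and $k_0$-regularity follows from part c) of the hypothesis at level $k_0-1$, which guarantees $k$-regularity for all $k_0\leq k\leq n_0$ (in particular $k=k_0$). The lemma then produces a dominant weight $\theta_{k_0}\in L$ with component support $\Xi_0$, with $\mu+\theta_{k_0}\in L$, with $w_0^{(k_0)}\bigl(\pi_{k_0}(\theta_{k_0})\bigr)=-\pi_{k_0}(\theta_{k_0})$, and with $\pi_k(\theta_{k_0})$ a positive integral multiple of $\pi_k(\eta_{k_0-1})$ for each $k\in\Xi_0\setminus\{k_0\}$. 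I set $\eta_{k_0}:=\theta_{k_0}$, which immediately gives properties a) and b) at level $k_0$.

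The one point requiring care is property c): I need $\eta_{k_0}$ to remain $k$-regular for every $k_0+1\leq k\leq n_0$. Here I use that, for such $k$, $\pi_k(\eta_{k_0})$ is a \emph{positive} integral multiple of $\pi_k(\eta_{k_0-1})$, and regularity is preserved under scaling by a positive integer since $\langle c\nu,\alpha_i^\vee\rangle=c\langle\nu,\alpha_i^\vee\rangle>0$ whenever $c\in\BZ_{>0}$ and $\langle\nu,\alpha_i^\vee\rangle>0$. Since $\eta_{k_0-1}$ was $k$-regular for all $k\geq k_0$ (in particular for all $k\geq k_0+1$) by the induction hypothesis, scaling each such component by a positive integer preserves $k$-regularity, so $\eta_{k_0}$ is $k$-regular for $k_0+1\leq k\leq n_0$ as required. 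This completes the inductive construction and proves the lemma.

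The main obstacle is really just keeping the bookkeeping straight across the two interacting recursions: the application of Lemma \ref{a.2} alters the $k_0$-component drastically (forcing the $-w_0^{(k_0)}$ symmetry) while only scaling the other in-support components, and I must verify that this scaling does not destroy the regularity needed to feed the \emph{next} application of Lemma \ref{a.2}. The positivity of the scaling factor is exactly what makes the induction go through, so the argument is essentially a clean unwinding rather than a hard estimate.
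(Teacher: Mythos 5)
Your proposal is correct and follows essentially the same route as the paper's own proof: an induction in which each step feeds $\eta_{k_0-1}$ into Lemma \ref{a.2}, with conditions a) and b) coming directly from that lemma's output and condition c) preserved because $\pi_k(\eta_{k_0})$ is a \emph{positive} integral multiple of the $k$-regular weight $\pi_k(\eta_{k_0-1})$ for $k>k_0$. Your explicit verification of the hypotheses of Lemma \ref{a.2} (including extracting $k_0$-regularity from condition c) at the previous level) matches the paper's argument, so there is nothing to add.
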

\begin{proof}
Construct the sequence by induction. Since $\eta_0=\omega_{\lambda}$, it is dominant in $L$ with component support $\Xi_0$. We know $\omega_{\lambda}$ is $1$-regular and $\mu+\omega_{\lambda}\in L$. Then by applying Lemma \ref{a.2} to $k_0=1$ and $\omega=\omega_{\lambda}$, one can obtain a dominant weight $\eta_1$ in $L$ with component support $\Xi_0$. Directly by Lemma \ref{a.2}, $\eta_1$ satisfies condition a) and b). Moreover, for any $2\leq k\leq n_0$, $\pi_{k}(\eta_1)$ is a positive integral multiple of $\pi_{k}(\omega_\lambda)$, which is $k$-regular. Thus $\eta_1$ is $k$-regular for any $2\leq k\leq k_0$. Then we have constructed $\eta_1$ satisfying all conditions.

Suppose we have constructed $\{\eta_1,...,\eta_l\}$ satisfying all conditions. By induction hypothesis, we have $\eta_l$ and $\mu+\eta_l$ are in $L$ with component support $\Xi_0$. We also have $\eta_l$ is $(l+1)$-regular. Therefore, we can apply Lemma \ref{a.2} to $k_0=l+1$ and $\omega=\eta_l$. Then we obtain a dominant weight $\eta_{l+1}$ in $L$ with component support $\Xi_0$. Again directly by Lemma \ref{a.2}, $\eta_{l+1}$ satisfies condition a) and b). Moreover, for any $l+2\leq k\leq n_0$, $\pi_{k}(\eta_{l+1})$ is a positive integral multiple of $\pi_{k}(\eta_{l})$, which is $k$-regular. Then $\eta_{l+1}$ also satisfies condition c). Then we have constructed $\eta_{l+1}$ satisfying all conditions. By induction, this lemma is proved.
\end{proof}
Now we consider the dominant weight $\eta_{n_0}\in L$ in Lemma \ref{a.3}. We know $\mu+\eta_{n_0}$ is also in $L$. For any $1\leq k \leq n_0$, we have $w_0^{(k)}\big(\pi_{k}(\eta_{n_0})\big)=-\pi_{k}(\eta_{n_0})$ since $\pi_{k}(\eta_{n_0})$ is a positive integral multiple of $\pi_{k}(\eta_{k})$ and $w_0^{(k)}\big(\pi_{k}(\eta_{k})\big)=-\pi_{k}(\eta_{k})$. Then $w_0(\eta_{n_0})=-\eta_{n_0}$ and $\eta_{n_0}$ is a desirable dominant weight $\eta\in L$ in Subsection \ref{4.a}. Therefore, the proof of Proposition \ref{3.5} reduces to the proof of Lemma \ref{a.1}.

\subsection{Proof of Lemma \ref{a.1}} \label{4.c}
To prove Lemma \ref{a.1}, we compute it depending on type since $w_0$ acts differently in different types. There are four cases in total and the following computations are base on some basic facts of Dynkin diagrams and root data (see e.g. \cite[\S 6.4]{B}).

\subsubsection{Type $A_1,B_n,C_n,D_{2n},E_7,E_8,F_4,G_2$} 

In these types, we know $w_0=-1$. Thus the sequence can be chosen as $\{\omega\}$.

\subsubsection{Type $E_6$} 
Label the vertices of the Dynkin diagram of $E_6$ as following 
\begin{center}
    \begin{tikzpicture}[baseline=0]
    \node[] at (0,0) {$\circ$};
    \node[below] at (0,0) {$\alpha_1$};
    \draw[-] (0.08,0) to (0.93,0);
    \node[] at (1,0) {$\circ$};
    \node[below] at (1,0) {$\alpha_3$};
    \draw[-] (1.08,0) to (1.93,0);
    \node[] at (2,0) {$\circ$};
    \node[below] at (2,0) {$\alpha_4$};
    \draw[-] (2.08,0) to (2.93,0);
    \node[] at (3,0) {$\circ$};
    \node[below] at (3,0) {$\alpha_5$};
    \draw[-] (3.08,0) to (3.93,0);
    \node[] at (4,0) {$\circ$};
    \node[below] at (4,0) {$\alpha_6$};
    \draw[-] (2,0.08) to (2,0.93);
    \node[] at (2,1) {$\circ$};
    \node[right] at (2,1) {$\alpha_2$};
    \end{tikzpicture}.
\end{center}
For convenience, we denote weights in the following way. Let $\nu$ be any weight. It is a $\mathbb{Q}$-combination of simple roots, i.e., $\nu= \sum\limits_{i=1}^6 k_i \alpha_i$. Denote $\nu$ by the 6-tuple
$(\begin{array}{r@{}c@{}l} &\;\;k_2& \\
        k_1,k_3&,k_4&,k_5,k_6
   \end{array})$. Therefore, one can write down the list of fundamental weights as 6-tuples:
\begin{align*}
   \omega_1 &=\frac{1}{3}
   (\begin{array}{r@{}c@{}l}
      &\;\;3& \\
         4,5&,6&,4,2
   \end{array}),  & 
   \omega_2&=
   (\begin{array}{r@{}c@{}l}
      &\;\;2& \\
         1,2&,3&,2,1 
   \end{array}), &  \omega_3&=\frac{1}{3}(\begin{array}{r@{}c@{}l}
      &\;\;6& \\
        5,10&,12&,8,4
   \end{array}), \\ \omega_4 &=(\begin{array}{r@{}c@{}l}
      &\;\;3& \\
         2,4&,6&,4,2
   \end{array}),  & \omega_5&=\frac{1}{3}(\begin{array}{r@{}c@{}l}
      &\;\;6& \\
        4,8&,12&,10,5
   \end{array}), & \omega_6&=\frac{1}{3}(\begin{array}{r@{}c@{}l}
      &\;\;3& \\
        2,4&,6&,5,4
   \end{array}).   
 \end{align*}

In this type, we have $w_0$ transforms $\alpha_1,\alpha_2,\alpha_3,\alpha_4,\alpha_5,\alpha_6$ into $-\alpha_6,-\alpha_2,-\alpha_5,-\alpha_4,-\alpha_3,-\alpha_1,$ respectively. Then we have $w_0(\omega_2)=-\omega_2$ and $w_0(\omega_4)=-\omega_4$. Therefore, we want the dominant weight $\nu_r$ to be an nonnegative linear combination of $\omega_2$ and $\omega_4$.

Now we give the construction of the sequence $\{\nu_1,...,\nu_r\}$. First we set $\nu_1:=\omega+s_6(\omega)$. For any weight $\nu=(\begin{array}{r@{}c@{}l}
      &\;\;k_2& \\
         k_1,k_3&,k_4&,k_5,k_6
   \end{array})$, we have $s_6(\nu)=(\begin{array}{r@{}c@{}l}
      &\;\;k_2&\\
         k_1,k_3&,k_4&,k_5,k_5-k_6
   \end{array})$. Thus for fundamental weights, we have
\begin{center}
    $\left\{\begin{array}{l}
      \omega_6 +s_6(\omega_6)=\omega_5, \\
\omega_i+s_6(\omega_i)=2\omega_i ,\quad 1\leq i \leq 5.
    \end{array}\right.$
\end{center}
Therefore, we have $\nu_1\in \sum\limits_{i=1}^5 \mathbb{Z}_{\geq 0} \omega_i$ is dominant. Then $\nu_1=\omega+s_6(\omega) \in X(\omega, \omega) \subset L$ by Theorem \ref{2.3}. 
  
Then we set $\nu_2:= \nu_1 + s_5(\nu_1)$. For any weight $\nu=(\begin{array}{c}
      k_2 \\
         k_1,k_3,k_4,k_5,k_6
   \end{array})$, we have $s_5(\nu)=(\begin{array}{r@{}c@{}l}
      &\;\;k_2& \\
         k_1,k_3&,k_4&,k_4+k_6-k_5,k_6
   \end{array})$. Thus for fundamental weights, we have
\begin{center}
    $\left\{\begin{array}{l}
         \omega_5+s_5(\omega_5)=\omega_4+\omega_6, \\
    \omega_i+s_5(\omega_i)=2\omega_i ,\quad i\neq 5 .
    \end{array}\right.$
\end{center}   
Therefore, $\nu_2$ is dominant by above computation. Then we have $\nu_2=\nu_1+s_5(\nu_1)\in X(\nu_1,\nu_1) \subset L$ by Theorem \ref{2.3}. 
 
Then set $\nu_3:=\nu_2+s_6s_5(\nu_1)=\nu_1+s_5(\nu_1)+s_6s_5(\nu_1)$. By above computations, we have 
\begin{center}
    $s_6s_5(\omega_i)=\left\{ \begin{array}{ll}
      s_6(\omega_4+\omega_6-\omega_5)=\omega_4-\omega_6,   & i=5, \\
      s_6(\omega_6)=\omega_5-\omega_6,   & i=6, \\
      s_6(\omega_i)=\omega_i,  & i=1,2,3,4.
    \end{array}\right.$
\end{center}
Then we have
 \begin{center}
       $ \left\{\begin{array}{l}
         \omega_5+s_5(\omega_5)+s_6s_5(\omega_5)=\omega_4+\omega_6+\omega_4-\omega_6=2\omega_4    ,  \\
     \omega_i+s_5(\omega_i)+s_6s_5(\omega_i)=3\omega_i , \quad 1\leq i\leq 4.
         \end{array}\right. $
 \end{center}
Since $\nu_1 \in \sum\limits_{i=1}^5 \mathbb{Z}_{\geq 0} \omega_i$, we have $\nu_3 \in \sum\limits_{i=1}^4 \mathbb{Z}_{\geq 0} \omega_i$ is dominant. Still by Theorem \ref{2.3}, we have $\nu_3=\nu_2+s_6 s_5(\nu_1) \in X(\nu_2,\nu_1) \subset L$. 
  
Now we have obtained $\nu_3 \in L$ and $\nu_3 \in \sum\limits_{i=1}^4 \mathbb{Z}_{\geq 0} \omega_i$. Actually, the coefficients of $\omega_5$ and $\omega_6$ vanished in the process of obtaining $\nu_3$. By symmetry of the Dynkin diagram of type $E_6$ and the simple roots, we can make the coefficient of $\omega_1$ and $\omega_3$ vanished in a similar way. Set $\nu_4:=\nu_3+s_1(\nu_3)$, one can check $\nu_4 \in \sum\limits_{i=2}^4 \mathbb{Z}_{\geq 0} \omega_i$ is dominant and is in $L$ by Theorem \ref{2.3}. Then we set $\nu_5=\nu_4+s_3(\nu_4)$ and $\nu_6=\nu_5+s_1s_3(\nu_4)=\nu_4+s_3(\nu_4)+s_1s_3(\nu_4)$. Similar to $\nu_2$ and $\nu_3$, we have $\nu_5$ and $\nu_6$ are in $L$ and $\nu_6 \in \mathbb{Z}_{\geq 0} \omega_2 +\mathbb{Z}_{\geq 0} \omega_4$. Then we have $w_0(\nu_6)=-\nu_6$ by our computations before. Therefore, $\{\nu_1,...,\nu_6\}$ is a desirable sequence for Lemma \ref{a.1} in type $E_6$.

\subsubsection{Type $A_n(n\geq 2)$} 
Label the vertices of the Dynkin diagram of $A_n$ as following 
\begin{center}
   \begin{tikzpicture}[baseline=0]
   \node[] at (0,0) {$\circ$};
   \node[below] at (0,0) {$\alpha_1$};
   \draw[-] (0.08,0) to (0.93,0);
   \node[] at (1,0) {$\circ$};
   \node[below] at (1,0) {$\alpha_2$};
   \draw[dashed] (1.08,0) to (2.92,0);
   \node[] at (2.95,0) {$\circ$};
   \node[below] at (2.95,0) {$\alpha_{n-1}$};
   \draw[-] (3.03,0) to (3.88,0);
   \node[] at (3.95,0) {$\circ$};
   \node[below] at (3.95,0) {$\alpha_n$};
   \end{tikzpicture}.
\end{center}
For convenience, we still denote each weight $\nu$ by a $n-$tuple $(k_1,k_2,...,k_n)$, where $\nu=\sum\limits_{i=1}^n k_i \alpha_i$. Then for any $1\leq i\leq n$, the fundamental weight $\omega_i$ is denoted by
 \begin{center}
     $\omega_i=\frac{1}{n+1}(n-i+1,2(n-i+1),...,i(n-i+1),i(n-i),...,2i,i)$.
 \end{center}
In this type, $w_0$ transforms $\alpha_i$ into $-\alpha_{n+1-i}$. Then we have $w_0(\omega_1)=-\omega_n$ and $w_0(\omega_n)=-\omega_1$. Therefore, we want the dominant weight $\nu_r$ to be an nonnegative linear combination of $\omega_1$ and $\omega_n$. 

Now we give the construction of the sequence $\{\nu_1,...,\nu_r\}$. First we claim that there is a sequence $\{\zeta_j\}_{i=1}^n$ of dominant weights in $L$ such that, for any $1 \leq i \leq n$, $\zeta_i \in \sum\limits_{l=i}^n \mathbb{Z}_{> 0}\omega_l$.
Symmetrically, there is also a sequence $\{\theta_i\}_{i=1}^n$ of nonzero dominant weights in $L$ such that, for any $1\leq i\leq n$, $\theta_i \in \sum\limits_{l=1}^{n+1-i} \mathbb{Z}_{> 0}\omega_l$.

We proceed to prove the claim. Construct the sequence $\{\zeta_i\}_{i=1}^n$ as following:
\newline
(a) $\zeta_1=\omega$;
\newline
(b) $\zeta_{i+1}=\zeta_i+s_i(\zeta_i)+s_{i-1} s_i(\zeta_i)+...+s_1 s_2...s_{i-1} s_i (\zeta_i)$. For any $1\leq m\leq i+1$, denote the sum of first $m$ terms of right hand side by $\zeta_{i+1,m}$.

Symmetrically, we construct the sequence $\{\theta_i\}_{i=1}^n$ as following:
\newline
(a) $\theta_1=\omega$;
\newline
(b) $\theta_{i+1}=\theta_i+s_{n+1-i}(\theta_i)+s_{n+1-(i-1)} s_{n+1-i}(\theta_i)+...+s_n s_{n-1}...s_{n+1-(i-1)} s_{n+1-i} (\theta_i)$. For any $1\leq m\leq i+1$, denote the sum of first $m$ terms of right hand side by $\theta_{i+1,m}$.

Then we check these sequences satisfy our requirements by induction on $i$. By symmetry of the Dynkin diagram of type $A_n$ and the simple roots, we only need to check for $\{\zeta_i\}_{i=1}^n$. For any weight $\nu=(k_1,k_2,...,k_n)$, we have
\begin{center}
    $s_i(\nu)=\left\{ \begin{array}{ll}
      (k_1,k_2,...,k_{i-1},k_{i-1}+k_{i+1}-k_i,k_{i+1},...,k_n),  & i\neq 1,n, \\
      (k_2-k_1,k_2,...,k_{i-1},k_i,k_{i+1},...,k_n),   & i=1, \\
      (k_1,k_2,...,k_{i-1},k_i,k_{i+1},...,k_{n-1}-k_n),  & i=n.
    \end{array}\right.$
\end{center}
Thus for fundamental weights, we have
\begin{center}
    $s_i(\omega_l)=\left\{ \begin{array}{ll}
      \omega_{l-1}+\omega_{l+1}-\omega_l,  &  2 \leq l=i \leq n-1, \\
      \omega_2-\omega_1,  &  l=i=1, \\
      \omega_{n-1}-\omega_n,  & l=i=n, \\
      \omega_l, & l\neq i.
    \end{array}\right.$
\end{center}
For $i=1$, it is clear that $\zeta_1=\omega$ is in $L$ and $\omega\in \sum\limits_{l=1}^n \BZ_{> 0} \omega_l$. Suppose that $\{\zeta_1,...,\zeta_i\}$ satisfies our requirements where $i<n$. Then we look at $\zeta_{i+1}$. We first compute $\omega_i+s_i(\omega_i)+s_{i-1} s_i(\omega_i)+...+s_1 s_2...s_{i-1} s_i (\omega_i)$. For this we need the following lemma. Set $\omega_0=0$ for convenience.

\begin{lemma}\label{a.4}
Suppose that $1 \leq i \leq n-1$. Then $s_{i-l+1}... s_{i-1} s_i(\omega_i)=\omega_{i-l}-\omega_{i-l+1}+\omega_{i+1}$ for $1 \leq l \leq i$.
\end{lemma}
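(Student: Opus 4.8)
The plan is to prove the identity by induction on $l$, the only inputs being the explicit formula for the action of a simple reflection on the fundamental weights recorded just above the statement, together with the convention $\omega_0 = 0$. For the base case $l = 1$ I read off directly that $s_i(\omega_i) = \omega_{i-1} + \omega_{i+1} - \omega_i$, which is exactly $\omega_{i-1} - \omega_i + \omega_{i+1}$, i.e. the asserted formula with $i - l = i - 1$. Here it is already convenient to note that this holds uniformly for $1 \le i \le n-1$ once we read $\omega_0 = 0$, since the special rule $s_1(\omega_1) = \omega_2 - \omega_1$ is precisely the generic rule $s_1(\omega_1) = \omega_0 + \omega_2 - \omega_1$ under this convention.

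For the inductive step I assume the formula for some $l$ with $1 \le l \le i-1$, so that $s_{i-l+1}\cdots s_i(\omega_i) = \omega_{i-l} - \omega_{i-l+1} + \omega_{i+1}$, and apply $s_{i-l}$ on the left. The action formula gives $s_{i-l}(\omega_{i-l}) = \omega_{i-l-1} + \omega_{i-l+1} - \omega_{i-l}$, while $s_{i-l}$ fixes $\omega_{i-l+1}$ and $\omega_{i+1}$ because their indices differ from $i-l$. Collecting terms, the two copies of $\omega_{i-l+1}$ cancel and I am left with $\omega_{i-l-1} - \omega_{i-l} + \omega_{i+1}$, which is exactly the claim with $l$ replaced by $l+1$. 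This propagates the identity up to $l = i$, where the right-hand side becomes $-\omega_1 + \omega_{i+1}$ (again using $\omega_0 = 0$).

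The only point requiring care, and the main albeit entirely routine obstacle, is the bookkeeping of indices so that the uniform action rule applies at every step. Every reflection $s_j$ appearing in the product has $1 \le j \le i \le n-1$, so the special rule at the right end ($s_n(\omega_n) = \omega_{n-1} - \omega_n$) is never invoked; the only boundary actually met is $j = 1$, which is absorbed by the convention $\omega_0 = 0$ as noted above. Moreover $\omega_{i+1}$ is a genuine fundamental weight throughout since $i \le n-1$, and $s_{i-l}$ leaves it untouched because $i+1 \neq i - l$. Once these range checks are in place, the content of the lemma is simply the clean telescoping of the chain of reflections, and the computation closes as above.
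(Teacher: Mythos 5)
Your proof is correct and follows essentially the same route as the paper: induction on $l$, using the explicit action of simple reflections on fundamental weights, with the convention $\omega_0=0$ absorbing the boundary case $s_1(\omega_1)=\omega_2-\omega_1$. The only difference is cosmetic (you step from $l$ to $l+1$ by applying $s_{i-l}$ on the left, whereas the paper steps from $l-1$ to $l$ via $s_{i-l+1}$), and your explicit range checks on the indices are a welcome bit of extra care.
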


\begin{proof}
Proceed by induction on $l$. For $l=1$, we have $s_i(\omega_i)=\omega_{i-1}-\omega_i+\omega_{i+1}$ by our computations above. Suppose that the equation holds for $l-1$. Then for $l$, we have
\begin{align*}
    s_{i-l+1}...s_{i-1} s_i(\omega_i)&=s_{i-l+1}(\omega_{i-l+1}-\omega_{i-l+2}+\omega_{i+1}) \\
&=\omega_{i-l}-\omega_{i-l+1}+\omega_{i-l+2}-\omega_{i-l+2}+\omega_{i+1} \\
&=\omega_{i-l}-\omega_{i-l+1}+\omega_{i+1}.
\end{align*}
Therefore, the equation also holds for $l$. By induction, the lemma is proved.
\end{proof}

We know $\zeta_i\in \sum\limits_{l=i}^n \BZ_{> 0} \omega_l$ by induction hypothesis. Now by Lemma $\ref{a.4}$, we have $\omega_i+s_i(\omega_i)+s_{i-1} s_i(\omega_i)+...+s_1 s_2...s_{i-1} s_i (\omega_i)=\omega_i+(\omega_{i-1}-\omega_i+\omega_{i+1})+(\omega_{i-2}-\omega_{i-1}+\omega_{i+1})+...+(\omega_0-\omega_1+\omega_{i+1})\in \mathbb{Z}_{> 0}\omega_{i+1}$. This equation together with the fact that $s_1,s_2,...,s_i$ fix $\omega_l$ for $i+1\leq l\leq n$ show that $\zeta_{i+1}$ is contained in $\sum\limits_{l=i+1}^n \BZ_{> 0} \omega_l$.

Then we show that $\zeta_{i+1}$ is in $L$. Recall our construction of $\{\zeta_i\}_{i=1}^n$ and $\{\zeta_{i+1,m}\}_{m=1}^{i+1}$. Again by Lemma \ref{a.4}, for any $1 \leq m \leq i+1$, the sum of the first $m$ terms of $\omega_i+s_i(\omega_i)+s_{i-1} s_i(\omega_i)+...+s_1 s_2...s_{i-1} s_i (\omega_i)$ is $\omega_i+(\omega_{i-1}-\omega_i+\omega_{i+1})+...+(\omega_{i-m+1}-\omega_{i-m+2}+\omega_{i+1}) 
=(m-1)\omega_{i+1}+\omega_{i-m+1}$, which is dominant. Still together by the fact that $s_1,s_2,...,s_i$ fix $\omega_l$ for $i+1\leq l\leq n$, $\zeta_{i+1,m}$ is always dominant for $1 \leq m \leq i+1$. Then for any $2\leq m\leq i+1$, we have $\zeta_{i+1,m}\in X(\zeta_{i+1,m-1},\zeta_i)$. Since $\zeta_{i+1,1}=\zeta_i$ is in $L$ by induction hypothesis, we have $\zeta_{i+1,m}$ are in $L$ for all $1\leq m\leq i+1$ by applying Theorem \ref{2.3} successively. In particular, $\zeta_{i+1}=\zeta_{i+1,i+1}$ is in $L$. Therefore, the claim is true by induction and symmetry. 

Now that the claim is true. Then we have $\zeta_n=a\omega_n\in L$ and $\theta_n=b\omega_1\in L$ for some positive integers $a,b$. We set $\zeta=b\zeta_n+a\theta_n \in L$. Notice that $w_0(\zeta)=ab w_0(\omega_1+\omega_n)=-\zeta$.

By above construction, we have two sequences $\{\zeta_{i,m}\mid 2\leq i \leq n, 2\leq m \leq i\}$ and $\{\theta_{i,m}\mid 2\leq i \leq n, 2\leq m \leq i\}$ in $L$. We know that
\begin{center}
    $\left\{ \begin{array}{l}
      \zeta_{2,2}=\omega+s_1(\omega),  \\
      \theta_{2,2}=\omega+s_n(\omega),  \\
      \zeta_{i+1,m}=\zeta_{i+1,m-1}+ s_{i-m+2}...s_i (\zeta_{i,i}), \quad i\geq 2,2<m\leq i+1, \\
      \zeta_{i+1,2}=\zeta_{i,i}+s_i(\zeta_{i,i}), \quad i\geq 2, \\
      \theta_{i+1,m}=\theta_{i+1,m-1}+ s_{n+1-i+(m-2)}...s_{n+1-i} (\theta_{i,i}), \quad i\geq 2,2<m\leq i+1, \\
      \theta_{i+1,2}=\theta_{i,i}+s_{n+1-i}(\theta_{i,i}), \quad i\geq 2,  \\
      l\zeta_{n,n}=(l-1)\zeta_{n,n}+\zeta_{n,n}, \quad l\geq 2, \\
      l\theta_{n,n}=(l-1)\theta_{n,n}+\theta_{n,n}, \quad l\geq 2, \\
      \zeta=b\zeta_{n,n}+a\theta_{n,n}.
    \end{array}\right.$
\end{center}
Then consider the following sequence in $L$,
\begin{center}
    $\{\omega, \zeta_{2,2},\zeta_{3,2},\zeta_{3,3},...,\zeta_{n,n}, 2\zeta_{n,n},..., b\zeta_{n,n}, \theta_{2,2},\theta_{3,2},\theta_{3,3},...,\theta_{n,n}, 2\theta_{n,n},...,a\theta_{n,n},\zeta\} $.
\end{center}
This is a desirable sequence for Lemma \ref{a.1} in type $A_n$ by equations above.

\subsubsection{Type $D_{2n+1}$} 
Label the vertices of the Dynkin diagram of $D_{2n+1}$ as following
\begin{center}
     \begin{tikzpicture}[baseline=0]
     \node[] at (0,0) {$\circ$};
     \node[below] at (0,0) {$\alpha_1$};
     \draw[-] (0.08,0) to (0.93,0);
     \node[] at (1,0) {$\circ$};
     \node[below] at (1,0) {$\alpha_2$};
     \draw[dashed] (1.08,0) to (2.92,0);
     \node[] at (2.95,0) {$\circ$};
     \node[below] at (2.95,-0.05) {$\alpha_{2n-1}$};
     \draw[-] (3.02,0.03) to (3.79,0.38);
     \node[] at (3.85,0.4) {$\circ$};
     \node[right] at (3.85,0.4) {$\alpha_{2n}$};
     \draw[-] (3.02,-0.03) to (3.79,-0.38);
     \node[] at (3.85,-0.4) {$\circ$};
     \node[right] at (3.85,-0.4) {$\alpha_{2n+1}$};
     \end{tikzpicture}.
\end{center}
For convenience, we still denote each weight $\nu=\sum\limits_{i=1}^{2n+1}k_i \alpha_i$ by a $(2n+1)$-tuple $(k_1,k_2,...,k_{2n},k_{2n+1})$. Then we have the list of fundamental weights as $(2n+1)$-tuples
\begin{align*}
    &\omega_{2n}=(\frac{1}{2},\frac{2}{2}=1,...,\frac{2n-1}{2},\frac{2n+1}{4},\frac{2n-1}{4}), \\
    &\omega_{2n+1}=(\frac{1}{2},\frac{2}{2}=1,...,\frac{2n-1}{2},\frac{2n-1}{4},\frac{2n+1}{4}), \\
    &\omega_i=(1,2,...,i-1,i,i,...,i,\frac{i}{2},\frac{i}{2}), \quad 1\leq i\leq 2n-1.
\end{align*}
In this type, we have $w_0$ transforms $\alpha_{2n}, \alpha_{2n+1}$ into $-\alpha_{2n+1}, -\alpha_{2n}$ and acts as $-1$ on other simple roots. Then we have $w_0(\omega_i)=-\omega_i$ for $1\leq i \leq 2n-1$. Therefore,  we want the dominant weight $\nu_r$ to lie in $\sum\limits_{i=1}^{2n-1} \mathbb{Z}_{\geq 0} \omega_i$. 

Now we give the construction of the sequence $\{\nu_1,...,\nu_r\}$. First set $\nu_1:=\omega+s_{2n+1}(\omega)$. For any weight $\nu=(k_1,k_2,...,k_{2n+1})$, we have $s_{2n+1}(\nu)=(k_1,k_2,...,k_{2n-1},k_{2n},k_{2n-1}-k_{2n+1})$. Thus for fundamental weights, we have
\begin{center}
    $\left\{\begin{array}{l}
           \omega_{2n+1}+s_{2n+1}(\omega_{2n+1})=\omega_{2n-1}, \\
    \omega_{i}+s_{2n+1}(\omega_{i})=2\omega_{i}, \quad i \neq 2n+1.
    \end{array}\right.$
\end{center}
Therefore, we have $\nu_1 \in \sum\limits_{i=1}^{2n} \mathbb{Z}_{\geq 0} \omega_i$ is dominant. By Theorem \ref{2.3}, we have $\nu_1=\omega+s_{2n+1}(\omega)\in X(\omega,\omega)\subset L$.

Then we set $\nu_2:=\nu_1+s_{2n}(\nu_1)$. For any weight $\nu=(k_1,k_2,...,k_{2n+1})$, we have $s_{2n}(\nu)=(k_1,k_2,...,k_{2n-1},k_{2n-1}-k_{2n},k_{2n+1})$. Thus for fundamental weights, we have
\begin{center}
    $\left\{\begin{array}{l}
       \omega_{2n}+s_{2n}(\omega_{2n})=\omega_{2n-1}, \\
    \omega_{i}+s_{2n}(\omega_{i})=2\omega_{i}, \quad i\neq 2n.
    \end{array}\right.$
\end{center}
Since $\nu_1 \in \sum\limits_{i=1}^{2n} \mathbb{Z}_{\geq 0} \omega_i$, we have $\nu_2 \in \sum\limits_{i=1}^{2n-1} \mathbb{Z}_{\geq 0} \omega_i$ is dominant. By Theorem \ref{2.3} we have $\nu_2=\nu_1+s_{2n}(\nu_1)\in X(\nu_1,\nu_1)\subset L$. Since $\nu_2 \in \sum\limits_{i=1}^{2n-1} \mathbb{Z}_{\geq 0} \omega_i$, we have $w_0(\nu_2)=-\nu_2$ by our computations before. Therefore, we have $\{\nu_1,\nu_2\}$ is a desirable sequence for Lemma \ref{a.1} in type $D_{2n+1}$.

\section{Reductive case and Comparison}
\subsection{Comparison with Vinberg's results}
The definition of perfect submonoids was given by Vinberg in \cite[\S 1]{V}. Vinberg used this definition to develope his classification of reductive algebraic monoids. All algebraic monoids in this subection are assumed to be linear and irreducible.

Let $G$ be a connected reductive algebraic group with a maximal torus $T$. The natural action of $G\times G$ on $K[G]$ induces \cite[II.3.1 Satz 3]{KW85}
\begin{center}
    $K[G]=\bigoplus\limits_{\lambda \in X^*_+(T)}K[G]_\lambda$,
\end{center}
where $K[G]_{\lambda}\simeq L(\lambda)^*\bigotimes L(\lambda)$ is the linear space spanned by matrix entries of $L(\lambda)$. Every $(G\times G)$-stable subspace of $K[G]$ has the form of $K[G]_L=\bigoplus\limits_{\lambda \in L}K[G]_{\lambda}$ for some subset $L$ of $X^*_+(T)$. Let $M$ be a reductive monoid with unit group $G$. Then $K[M]$ is a $G\times G$-stable subalgebra of $K[G]$. By checking the multiplication of $K[M]$, we have $K[M]=K[G]_L$ where $L$ is a perfect submonoid of $X^*_+(T)$. Vinberg gave a description of reductive monoids with unit group $G$.

\begin{theorem}\cite[Theorem 1]{V}\label{5.1}
A submonoid $L$ of $X^*_+(T)$ defines an algebraic monoid $M$ with unit group $G$, if and only if $L$ is perfect, finitely generated and generating $X^*(T)$ as a group.
\end{theorem}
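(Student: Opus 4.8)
The plan is to work entirely through the weight decomposition $K[G]=\bigoplus_{\lambda\in X^*_+(T)}K[G]_\lambda$ recalled above, and to translate each of the three conditions on $L$ into a geometric condition on $M=\Spec K[G]_L$. The dictionary I would set up is: \emph{perfect} corresponds to $K[G]_L$ being a subalgebra; \emph{finitely generated} corresponds to $M$ being of finite type; and \emph{generating $X^*(T)$} corresponds to the open unit group of $M$ being exactly $G$. Once $K[G]_L$ is shown to be a sub-bialgebra, i.e. a subalgebra containing the unit and closed under the coproduct, the monoid structure on $M$ extending the group law of $G$ is automatic, so the real content is these three equivalences.

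First I would settle the algebra-versus-perfect equivalence. Since $K[G]_\lambda\cong L(\lambda)^*\otimes L(\lambda)$ is the span of matrix coefficients of $L(\lambda)$, the pointwise product $K[G]_\lambda\cdot K[G]_\mu$ is the span of matrix coefficients of $L(\lambda)\otimes L(\mu)$, whence $K[G]_\lambda\cdot K[G]_\mu=\bigoplus_{\nu\in X(\lambda,\mu)}K[G]_\nu$. Thus $K[G]_L$ is closed under multiplication precisely when $\lambda,\mu\in L$ forces $X(\lambda,\mu)\subseteq L$, which is exactly perfectness; the unit lies in $K[G]_0$ and $0\in L$. Closure under the coproduct needs no hypothesis, since the coproduct of a matrix coefficient of $L(\lambda)$ lands in $K[G]_\lambda\otimes K[G]_\lambda$, so $m^*(K[G]_L)\subseteq K[G]_L\otimes K[G]_L$. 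This supplies the monoid multiplication on $M$ together with the morphism $G\to M$.

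Next I would treat the finite-type-versus-finitely-generated equivalence. By Lemma \ref{2.1} every $\nu\in X(\lambda,\mu)$ satisfies $\nu\preceq\lambda+\mu$, with the Cartan component $\nu=\lambda+\mu$ occurring and its projection surjective. Hence the dominance order defines an algebra filtration on $K[G]_L$ whose associated graded is the $L$-graded algebra $\bigoplus_{\lambda\in L}K[G]_\lambda$ equipped with the Cartan product $K[G]_\lambda\otimes K[G]_\mu\twoheadrightarrow K[G]_{\lambda+\mu}$. Since each $K[G]_\lambda$ is finite dimensional and the degree-zero piece is $K$, this graded algebra is finitely generated over $K$ exactly when the grading monoid $L$ is finitely generated (the Cartan surjectivity guarantees every sum of generating degrees actually occurs in $L$), and $K[G]_L$ is finitely generated if and only if its associated graded is. Chaining these gives $M$ of finite type if and only if $L$ is finitely generated as a monoid.

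Finally I would address the unit-group-versus-generation equivalence, which I expect to be the main obstacle. The inclusion $K[G]_L\hookrightarrow K[G]$ gives a dominant morphism $G\to M$, and I would show it is an open immersion onto the group $M^{\times}$ of invertible elements exactly when $L$ generates $X^*(T)$ as a group. The mechanism is that $G\to M$ is birational if and only if the two coordinate rings have the same field of fractions, and restricting to the maximal torus reduces this to the toric fact that $\Spec K[S]$ has unit group $T$ precisely when the submonoid $S\subseteq X^*(T)$ of occurring weights generates $X^*(T)$. The delicate points, where the work concentrates, are: (i) that $M^{\times}$, rather than merely containing $G$, coincides with $G$ under the generation hypothesis, which requires controlling how $T$ sits inside $M$ and ruling out a larger torus of units via the root datum; and (ii) the converse, that $M^{\times}=G$ forbids a proper sublattice from carrying all of $L$, since otherwise $G\to M$ would factor through a proper isogeny quotient and enlarge the unit group. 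Both directions hinge on the torus computation together with the fact that a reductive monoid is determined by its unit group and the closure of a maximal torus, so I would organize the argument around the torus embedding $\overline{T}\subseteq M$ and transport the conclusion to $G$ using the $(G\times G)$-action.
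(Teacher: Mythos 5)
First, a caveat about the comparison itself: the paper does not prove this statement. Theorem \ref{5.1} is quoted from Vinberg \cite[Theorem 1]{V}; the paper only records the surrounding dictionary ($K[G]=\bigoplus_{\lambda\in X^*_+(T)}K[G]_\lambda$ and the fact that $K[M]=K[G]_L$ with $L$ perfect). So your proposal can only be measured against the standard Vinberg-style argument. Your overall architecture is the right one, and your first step is correct: $K[G]_\lambda\cdot K[G]_\mu=\bigoplus_{\nu\in X(\lambda,\mu)}K[G]_\nu$, the coproduct preserves each $K[G]_\lambda\otimes K[G]_\lambda$, hence ``perfect $\Leftrightarrow$ sub-bialgebra'' holds. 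The two remaining equivalences, however, each contain a genuine gap.

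Finite generation. You assert that $K[G]_L$ is finitely generated if and only if its associated graded (the Cartan-product algebra) is. The direction you need for the ``only if'' half of the theorem --- $M$ of finite type $\Rightarrow$ $L$ finitely generated --- is ``filtered f.g.\ $\Rightarrow$ graded f.g.'', and this is \emph{false} for general filtered algebras: e.g.\ $A=K[x]$ with $F_n=K+\mathrm{span}(x^j : j\le n^2)$ is finitely generated while its associated graded is not (for $a,b\ge 2$ all products $\mathrm{gr}_a\cdot \mathrm{gr}_b$ vanish, yet $\dim \mathrm{gr}_n=2n-1$ grows). In the present equivariant setting the implication is true, but it is a theorem, not a formal consequence of the filtration: the standard route is to pass to $(U\times U^-)$-invariants, where $\bigl(K[G]_\lambda\bigr)^{U\times U^-}$ is one-dimensional and highest-weight vectors multiply by the Cartan product, so $\bigl(K[G]_L\bigr)^{U\times U^-}\simeq K[L]$ as algebras; the Grosshans--Had\v{z}iev theorem (unipotent invariants of a finitely generated algebra with rational reductive group action are finitely generated) then gives $K[L]$ finitely generated, hence $L$ finitely generated as a monoid. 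Your filtration picture is exactly the horospherical contraction underlying this argument, but without that input the key implication is unsupported. Note the input cannot be avoided by softness: by Lemma \ref{5.5} perfectness alone forces finite generation when $G$ is semisimple, but Example \ref{5.7} shows it does not in the reductive case, so finite type of $M$ must genuinely be used.

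Unit group. Restricting $K[G]_L$ to $T$ (equivalently forming $\overline{T}\subseteq M$) does \emph{not} give the monoid algebra of $L$: it gives the monoid algebra of the monoid generated by all occurring weights $\bigcup_{\lambda\in L}\Pi(\lambda)$, which is strictly larger than $L$ in general. The toric computation therefore yields: $T\to\overline{T}$ is injective --- equivalently $G\to M$ is injective, since a normal subgroup of connected reductive $G$ meeting $T$ trivially is trivial, and then $G\to M^\times$ is surjective because its image is a dense constructible subgroup of the open subgroup $M^\times$ --- if and only if $\bigl\langle \bigcup_{\lambda\in L}\Pi(\lambda)\bigr\rangle=X^*(T)$. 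To convert this into the condition stated in the theorem, $\langle L\rangle=X^*(T)$, you need the bridge $\bigl\langle \bigcup_{\lambda\in L}\Pi(\lambda)\bigr\rangle=\langle L\rangle$ for perfect $L$, and this is precisely where perfectness re-enters a second time: for $\lambda'\in\Pi(\lambda)$ and $\mu\in L$ sufficiently regular on the support of $\lambda$ (such $\mu$ exist by the argument of Lemma \ref{3.3}), Lemma \ref{2.2} gives $\lambda'+\mu\in X(\lambda,\mu)\subseteq L$, whence $\lambda'\in\langle L\rangle$. Your sketch tacitly identifies the two monoids, so the equivalence you reduce to is not the one claimed; this bridge, and not the toric fact, is the actual content of the unit-group step.
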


Moreover, based on the fact that every algebraic monoid admits a normalization \cite[Proposition 3.15]{Re}, Vinberg gave a classification of normal reductive monoids in \cite[Theorem 2]{V}. This classification is important in Vinberg's construction of Vinberg monoids in \cite[Theorem 5]{V}.

By \cite[Lemma 1.1]{KKDS}, if $L$ is perfect, finitely generated and generates $X^*(T)$ as a group, then the reductive monoid $M$ defined by $L$ is normal if and only if $L\subset X^*(T)$ is \textit{saturated} in the following sense.
\begin{definition}\cite[Definition 1.2]{KKDS}\label{5.3}
Let $L$ be a subset of $X^*(T)$. Suppose that for any $ \lambda \in X^*(T)$, if there is an integer $n>1$ such that $n\lambda \in L$, then $\lambda\in L$. Then $L$ is called \textit{saturated}.
\end{definition}
\begin{remark}
Note this definition is different from the definition of \textit{saturated} in Section \ref{2}. For example, let $\lambda$ be a dominant weight. The subset $\Pi(2\lambda)\subset X^*(T)$ is not always saturated in the sense of Definition \ref{5.3} since $\lambda$ may not be in $\Pi(2\lambda)$. In this section we are always using the definition from \cite{KKDS}.
\end{remark}
Therefore, Vinberg's results give a characterization for perfect submonoids of $X^*_+(T)$ which are finitely generated, saturated and generates $X^*(T)$ as a group. Then we compare our results with Vinberg's results on perfect submonoids of dominant weights.

\subsubsection{Semisimple case}
Suppose that $G$ is semisimple. Let $G^{sc}$ be the simply connected cover of $G$ with a maximal torus $T^{sc}$, and $\Xi=\{1,2,...,n\}$ be the index set of the quasi-simple factors of $G^{sc}$.

\begin{lemma}\label{5.5}
Suppose that $G$ is a connected semisimple group. The perfect submonoids of $X^*_+(T)$ are all finitely generated.
\end{lemma}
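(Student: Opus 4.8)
The plan is to combine the explicit description of perfect submonoids obtained in Corollary \ref{3.12} with Gordan's lemma from the theory of rational polyhedral cones. First I would dispose of the trivial case: the zero submonoid $\{0\}$ is finitely generated. So suppose $L$ is a nonzero perfect submonoid of $X^*_+(T)$; it then has some nonempty component support $\Xi_0 \subseteq \Xi$, and by Corollary \ref{3.12} a) we may write $L = \Tilde{L} \cap X^*_+(T)$, where $\Tilde{L}$ is a sublattice of $X^*(T)_{\Xi_0}$ containing $Q_{\Xi_0}$. Thus it suffices to show that for any such $\Tilde{L}$ the monoid $\Tilde{L}\cap X^*_+(T)$ is finitely generated.

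The next step is to realize $L$ as the intersection of a full-rank lattice with a rational polyhedral cone. Set $V_{\Xi_0} = X^*(T^{sc})_{\Xi_0} \otimes_{\BZ} \BR$ and let $J_0 = \bigsqcup_{k\in \Xi_0} I_k$ be the indexing set of simple roots of $G^{sc}_{\Xi_0}$. Let $C = \{x \in V_{\Xi_0} \mid \langle x, \alpha_i^{\vee} \rangle \geq 0 \text{ for all } i \in J_0\}$ be the dominant Weyl chamber of $G^{sc}_{\Xi_0}$. I would first observe that for $\lambda \in X^*(T^{sc})_{\Xi_0}$ one has $\pi_k(\lambda)=0$ for $k \notin \Xi_0$, hence $\langle \lambda, \alpha_i^{\vee}\rangle = 0$ automatically for $i \notin J_0$; therefore dominance of such a $\lambda$ is equivalent to $\lambda \in C$. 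Since $\Tilde{L} \subseteq X^*(T^{sc})_{\Xi_0}$, this gives $\Tilde{L}\cap X^*_+(T) = \Tilde{L}\cap C$, so $L = \Tilde{L}\cap C$. Moreover, because $Q_{\Xi_0}\subseteq \Tilde{L}$ and $Q_{\Xi_0}$ already spans $V_{\Xi_0}$ over $\BR$, the lattice $\Tilde{L}$ has full rank in $V_{\Xi_0}$.

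Finally I would invoke Gordan's lemma: the intersection of a rational polyhedral cone with a full-rank lattice, with respect to which the cone is rational, is a finitely generated monoid. Here $C$ is cut out by the finitely many inequalities $\langle \cdot, \alpha_i^{\vee}\rangle \geq 0$ with $i \in J_0$, and each $\alpha_i^{\vee}$ is an integral functional on $X^*(T^{sc})_{\Xi_0} \supseteq \Tilde{L}$; hence $C$ is rational with respect to $\Tilde{L}$ and Gordan's lemma applies to the pair $(C, \Tilde{L})$, yielding that $L = \Tilde{L}\cap C$ is finitely generated.

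There is essentially no serious obstacle in this argument: all of the real content lies in the structural Corollary \ref{3.12}, which presents every perfect submonoid as a lattice-cone intersection, after which finite generation is a formal consequence. The only points requiring care are verifying the rationality hypothesis of Gordan's lemma (immediate from integrality of the coroot functionals) and checking that the ambient cone may be taken inside $V_{\Xi_0}$ rather than all of $X^*(T^{sc})\otimes\BR$ (immediate from the vanishing of $\pi_k(\lambda)$ for $k\notin\Xi_0$). I would present these two verifications explicitly and leave the statement of Gordan's lemma as a citation.
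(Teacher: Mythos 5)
Your proof is correct and takes essentially the same route as the paper: both reduce via Corollary \ref{3.12} (equivalently Theorem \ref{A}) to the presentation $L=\Tilde{L}\cap X^*_+(T)$ of a perfect submonoid as a sublattice containing the root lattice intersected with the rational dominant cone. The only difference is that where the paper observes that $L$ is the dominant-weight monoid of a connected semisimple group $G'$ and declares it ``clearly finitely generated,'' you supply that final step explicitly by verifying the hypotheses of Gordan's lemma.
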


\begin{proof}
Recall the notations in Subsection \ref{3.e}. By Subsection \ref{3.e}, every perfect submonoid $L$ of $X^*_+(T)$ with component support $\Xi_0 \subset \Xi$ can be viewed as a perfect submonoid of $X^*_+(T^{sc})_{\Xi_0}$ with full component support. Therefore, we may assume that $G$ is simply connected and $L$ has full component support. Then $L=\Tilde{L}\cap X^*_+(T)$, where $\Tilde{L}$ is a sublattice of $X^*(T)$ containing $Q$, by Theorem \ref{A}. Therefore, we have $L$ is the set of dominant weights of a connected semisimple group $G'$ with simply connected cover $G$. Thus $L$ is clearly finitely generated.
\end{proof}

Therefore, our result differs from Vinberg's theorem in the sense that, we do not assume that $L$ is saturated or generates $X^*(T)$ as a group. Actually, these two conditions do not hold in general. In conclusion, our results give a complete characterization of all perfect submonoids of dominant weights in semisimple case.

\subsubsection{Reductive case}
Suppose that $G$ is reductive but not semisimple. Let $G_0$ be its derived subgroup and $Z^0$ be its connected center. Fix a maximal unipotent subgroup $U$ and a Borel subgroup $B=TU$ of $G$. Let $T_0=T\cap G_0$ be a maximal torus of $G_0$. Since $T$ is an almost direct product of $T_0$ and $Z^0$, there is an natural embedding
\begin{center}
    $i:X^*(T)\to X^*(T_0)\bigoplus X^*(Z^0)$
\end{center}
given by restrictions. Therefore, we identify each dominant weight $\lambda\in X^*_+(T)$ with a pair $i(\lambda)=(\mu,\nu)$, where $\mu=\lambda|_{T_0}\in X^*_+(T_0)$ and $\nu=\lambda|_{Z^0}\in X^*_+(Z^0)$. Then we naturally relate perfect submonoids of $X^*_+(T)$ to perfect submonoids of $X^*_+(T_0)\bigoplus X^*_+(Z^0)$.

\begin{proposition}\label{5.6}
The perfect submonoids of $X^*_+(T)$ are exactly the perfect submonoids of $X^*_+(T_0)\bigoplus X^*_+(Z^0)$ contained in the image of $i:X^*(T)\to X^*(T_0)\bigoplus X^*(Z^0)$.
\end{proposition}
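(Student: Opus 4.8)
The plan is to deduce the statement from the fact that the representation theory of $G$ is governed by that of the reductive group $G_0 \times Z^0$ through the multiplication isogeny, in exactly the spirit of the simply connected reduction of Subsection \ref{3.a}. First I would introduce the multiplication map $\pi \colon G_0 \times Z^0 \to G$, $(g,z)\mapsto gz$. Since $G$ is the almost direct product of its derived subgroup $G_0$ and connected center $Z^0$, the map $\pi$ is surjective, and its kernel $N=\{(z,z^{-1})\mid z\in G_0\cap Z^0\}$ is finite and central, so $\pi$ is a central isogeny. Restricting $\pi$ to maximal tori gives $1 \longrightarrow N \longrightarrow T_0\times Z^0 \longrightarrow T \longrightarrow 1$, and hence the exact sequence
$$0 \longrightarrow X^*(T) \xrightarrow{i} X^*(T_0)\oplus X^*(Z^0) \longrightarrow X^*(N) \longrightarrow 0.$$
Thus $\Im(i)$ is precisely the set of characters of $T_0\times Z^0$ trivial on $N$, and since the simple coroots of $G$ coincide with those of $G_0$, the map $i$ restricts to a bijection between $X^*_+(T)$ and the dominant weights of $G_0\times Z^0$ lying in $\Im(i)$.

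Next I would transport tensor product decompositions across $\pi$. Pullback of representations along the surjection $\pi$ is a fully faithful tensor functor $\Rep(G)\to\Rep(G_0\times Z^0)$ whose essential image consists of the representations on which $N$ acts trivially; this is the same mechanism already used in Subsection \ref{3.a}. Under this functor $L(\lambda)$ corresponds to the irreducible of $G_0\times Z^0$ with highest weight $i(\lambda)$, and fully faithfulness forces the tensor multiplicities to agree. Because $G_0\times Z^0$ is a direct product, its tensor decomposition factors, so that
$$X_{G_0\times Z^0}\big((\mu_1,\nu_1),(\mu_2,\nu_2)\big)=X_{G_0}(\mu_1,\mu_2)\times\{\nu_1+\nu_2\}.$$
Combining these observations, for dominant weights $\lambda,\mu$ of $G$ one obtains the crucial identity $i\big(X(\lambda,\mu)\big)=X_{G_0\times Z^0}\big(i(\lambda),i(\mu)\big)$.

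With this compatibility the proposition is essentially formal. Identifying $X^*_+(T)$ with its image under the injective monoid homomorphism $i$, a submonoid $L\subset X^*_+(T)$ satisfies the condition of Definition \ref{1.1} for $G$ if and only if $i(L)$ satisfies it for $G_0\times Z^0$, since ``$\lambda,\mu\in L$ implies $X(\lambda,\mu)\subset L$'' translates verbatim into ``$i(\lambda),i(\mu)\in i(L)$ implies $X_{G_0\times Z^0}(i(\lambda),i(\mu))\subset i(L)$''. Conversely, any perfect submonoid $M$ of $X^*_+(T_0)\oplus X^*_+(Z^0)$ contained in $\Im(i)$ equals $i(L)$ for $L=i^{-1}(M)$, which is then perfect by the same equivalence; this gives the asserted identification. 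The only nonformal ingredient is the compatibility of tensor decompositions across $\pi$, which I expect to be the main point, but it is exactly the argument already invoked in Subsection \ref{3.a} for the simply connected cover, so the only genuinely new bookkeeping is tracking the finite kernel $N$ and the product structure of $G_0\times Z^0$.
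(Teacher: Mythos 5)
Your proposal is correct, and its formal skeleton (the bijection on submonoids induced by $i$, the crucial identity $i\big(X(\lambda_1,\lambda_2)\big)=X_{G_0\times Z^0}\big(i(\lambda_1),i(\lambda_2)\big)$, and the verbatim translation of Definition \ref{1.1}) is the same as the paper's; but your proof of the crucial identity takes a genuinely different route. The paper never introduces the isogeny $\pi\colon G_0\times Z^0\to G$. Instead it argues by restriction along the inclusion $G_0\subset G$: Kostant's Lemma \ref{2.1}, together with the fact that the simple roots of $G$ are those of $G_0$ (hence restrict trivially to $Z^0$), shows that every $\lambda\in X(\lambda_1,\lambda_2)$ satisfies $\lambda|_{Z^0}=\nu_1+\nu_2$; then the tensor decomposition over $G$ is restricted to $G_0$, using that $L(\lambda)|_{G_0}$ is $L_0(\lambda|_{T_0})$-isotypic, and comparing summands yields $\{\lambda|_{T_0}\mid \lambda\in X(\lambda_1,\lambda_2)\}=X(\mu_1,\mu_2)$. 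You instead pull back along the central isogeny and invoke full faithfulness of $\pi^*$ --- exactly the mechanism of Subsection \ref{3.a} --- together with the factorization of tensor decompositions over a direct product. Your route buys uniformity (one categorical argument handles both the simply connected reduction and the reductive case) and identifies $\Im(i)$ structurally as the characters trivial on $N$; the paper's route stays at the level of explicit highest weights, needs neither the covering group nor its character sequence, and produces along the way the concrete description of $X\big((\mu_1,\nu_1),(\mu_2,\nu_2)\big)$ that Example \ref{5.7} reuses. One step you should make explicit when writing this up: to obtain the inclusion $X_{G_0\times Z^0}\big(i(\lambda_1),i(\lambda_2)\big)\subset i\big(X(\lambda_1,\lambda_2)\big)$, and not merely its reverse, you need that every irreducible summand of $\pi^*L(\lambda_1)\otimes \pi^*L(\lambda_2)$ lies in the essential image of $\pi^*$, i.e.\ that $N$ acts trivially on the tensor product and hence on each summand. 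This is immediate from your description of the essential image, but it is where that inclusion actually comes from, so it deserves a sentence.
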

\begin{proof}
It is clear that the embedding $i$ gives a 1-1 correspondence between submonoids of $X^*_+(T)$ with submonoids of $X^*_+(T_0)\bigoplus X^*_+(Z^0)$ contained in $\Im(i)$. It remains to show that $L\subset X^*_+(T)$ is perfect if and only if $i(L)\subset X^*_+(T_0)\bigoplus X^*_+(Z^0)$ is perfect.

Let $(\mu_1,\nu_1)=i(\lambda_1)$ and $(\mu_2,\nu_2)=i(\lambda_2)$ be arbitrary. We have 
\begin{center}
    $X\big((\mu_1,\nu_1),(\mu_2,\nu_2)\big)=\{(\mu,\nu_1+\nu_2)\mid \mu\in X(\mu_1,\mu_2)\subset X^*_+(T_0)\}$.
\end{center}
By Lemma \ref{2.1} and the fact that the simple roots of $G$ are exactly the simple roots of $G_0$, we have
\begin{center}
    $i\big(X(\lambda_1,\lambda_2)\big)=\{(\lambda|_{T_0},\nu_1+\nu_2)\mid \lambda\in X(\lambda_1,\lambda_2)\}$.
\end{center}

Now we consider the tensor product decomposition
\begin{center}
    $L(\lambda_1)\bigotimes L(\lambda_2)=\bigoplus\limits_{\lambda \in X(\lambda_1,\lambda_2)} L(\lambda)^{\oplus m_{\lambda_1,\lambda_2}^\lambda}$.
\end{center}
We know that $L(\lambda)=\{f\in K[G]\mid f(tg)=\lambda(t)f(g),\forall t\in T,g\in G\}$. By restricting on $G_0$, we have $L(\lambda)|_{G_0}$ is clearly a nontrivial $G_0$-module. Let $i(\lambda)=(\mu,\nu)$, which means $\lambda|_{T_0}=\mu$. Then by decomposing $L(\lambda)|_{G_0}$ into a direct sum of irreducible $G_0$-modules, we have $L(\lambda)=L_0(\mu)^{\oplus a_\lambda}$ for some positive integer $a_\lambda$, where $L_0(\mu)$ denotes the irreducible $G_0$-module with highest weight $\mu$. Therefore, the restriction of above equation on $G_0$ gives 
\begin{center}
    $L_0(\mu_1)^{\oplus a_{\lambda_1}}\bigotimes L_0(\mu_2)^{\oplus a_{\lambda_2}}=\bigoplus\limits_{\mu=\lambda|_{T_0},\lambda\in X(\lambda_1,\lambda_2)} L_0(\mu)^{\oplus a_\lambda m_{\mu_1,\mu_2}^\mu }$.
\end{center}
Recall the tensor product decomposition of $G_0$-modules
\begin{center}
    $L_0(\mu_1)\bigotimes L_0(\mu_2)=\bigoplus\limits_{\mu \in X(\mu_1,\mu_2)} L_0(\mu)^{\oplus m_{\mu_1,\mu_2}^\mu}$.    
\end{center}
By comparing the direct summands of the right hand side of two equations, we have $\{\lambda|_{T_0}\mid \lambda\in X(\lambda_1,\lambda_2)\}=X(\mu_1,\mu_2)$. Then $X\big((\mu_1,\nu_1),(\mu_2,\nu_2)\big)=i\big(X(\lambda_1,\lambda_2)\big)$. Then $X\big((\mu_1,\nu_1),(\mu_2,\nu_2)\big)\subset i(L)$ is equivalent to $X(\lambda_1,\lambda_2)\subset L$. Therefore, we have $L\subset X^*_+(T)$ is perfect if and only if $i(L)\subset X^*_+(T_0)\bigoplus X^*_+(Z^0)$ is perfect and the proposition is proved.
\end{proof}

Now Let $L$ be a submonoid of $X^*_+(T_0)\bigoplus X^*_+(Z^0)$ and $pr_Z(L)$ be the projection of $L$ to $X^*_+(Z^0)$. One can write $L$ as
\begin{center}
    $L=\bigcup\limits_{\nu\in pr_Z(L)} \{(\mu,\nu) \mid \mu \in L_{\nu}\}$,
\end{center}
where $L_{\nu}:=\{\mu \in X^*_+(T_0) \mid (\mu,\nu)\in L\}$. In general, the perfect submonoids of $X^*_+(T_0)\bigoplus X^*_+(Z^0)$ could be complicated when $G$ is reductive but not semisimple. To see that, we consider the following example when $L_0=\{0\}$.

\begin{example}\label{5.7}
Construct the perfect submonoid $L$ of $X^*_+(T_0)\bigoplus X^*_+(Z^0)$ as following.

Let $pr_Z(L)$ be $\BZ_{\geq 0}\nu$, where $\nu$ is nonzero in $X^*_+(Z^0)$. We construct $L_{i\nu}$ $(i\in \BZ_{\geq 0}) $ inductively. For $i=0$, let $L_{0}=\{0\}$. Suppose we have already constructed $L_{i\nu}$ for $i<m$. For $i=m$, we take an arbitrary subet $X_m$ of $X^*_+(T)$ and construct $L_{m\nu}$ as
\begin{center}
    $L_{m\nu}=\big(\bigcup\limits_{k=1}^{m-1} X(L_{k\nu},L_{(m-k)\nu})\big)\bigcup X_{m}$,
\end{center}
where $X(L_{k\nu},L_{(m-k)\nu})$ denotes the union of sets $X(\lambda,\mu)$ for all $\lambda\in L_{k\nu}$ and $\mu \in L_{(m-k)\nu}$. 

We check the perfectness of $L$. Let $(\mu_1,i\nu),(\mu_2,j\nu)$ be any two dominant weights in $L$. By our discussion in Proposition \ref{5.6}, every dominant weight in $X\big((\mu_1,i\nu),(\mu_2,j\nu)\big)$ has the form of $\big(\mu,(i+j)\nu\big)$ for some $\mu \in X(\mu_1,\mu_2)\subset X(L_{i\nu},L_{j\nu})$. By our construction above, $\mu \in L_{(i+j)\nu}$ and thus $\big(\mu,(i+j)\nu\big)\in L$. Then $L$ is perfect. Since $X_m$ are all arbitrarily chosen, it is difficult to characterize such $L$.
\end{example}

\end{document}